\newcommand{\hbC}{\hat{\mathbb{C}}}
\newcommand{\bN}{\mathbb{N}}
\newcommand{\bC}{\mathbb{C}}
\newcommand{\bR}{\mathbb{R}}
\newcommand{\bD}{\mathbb{D}}
\newcommand{\TS}{\mathit{TS}}
\newcommand{\AT}{\mathit{AT}}
\newcommand{\PB}{\mathit{PB}}
\newcommand{\CM}{\mathit{CM}}
\newcommand{\UH}{\mathit{UH}}
\newcommand{\Int}{\operatorname{int}}
\newcommand{\diam}{\operatorname{diam}}
\newcommand{\rd}{\mathit{d}}
\theoremstyle{plain}
\newtheorem{theorem}[equation]{Theorem}
\newtheorem{lemma}[equation]{Lemma}
\newtheorem{corollary}[equation]{Corollary}
\newtheorem{mainth}{Theorem}
\newtheorem{maincoro}{Corollary}
\theoremstyle{definition}
\newtheorem{definition}[equation]{Definition}
\newtheorem{notation}[equation]{Notation}
\newtheorem*{ac}{Acknowledgement}
\theoremstyle{remark}
\newtheorem{remark}[equation]{Remark}
\newtheorem*{remarks}{Remarks}
\newtheorem*{supplement}{Supplement by D. Drasin}
\numberwithin{equation}{section}
\renewcommand{\bold}[1]{\smallskip \noindent {\bf \boldmath #1 }\nopagebreak[4]}
\begin{document}

\title[singularities and unhyperbolicity]{Singularities of Schr\"oder maps and unhyperbolicity of rational functions}

\author[David Drasin]{David Drasin}
\address{Department of Mathematics, Purdue University, West Lafayette, IN 47907, USA}
\email{drasin@math.purdue.edu}

\author[Y\^usuke Okuyama]{Y\^usuke Okuyama}
\address{Department of Comprehensive Sciences,
Graduate School of Science and Technology,
Kyoto Institute of Technology,
Kyoto 606-8585, JAPAN}
\email{okuyama@kit.ac.jp}

\subjclass[2000]{Primary 37F10; Secondary 30D05, 30D35, 37F15}

\keywords{Schr\"oder map, transcendental singularity,
 unhyperbolicity, complex dynamics, Nevanlinna theory,
 Pommerenke-Levin-Yoccoz inequality, Fatou conjecture}

\dedicatory{Dedicated to Professor Walter K. Hayman on his eightieth birthday}

\date{July 30, 2007}

\begin{abstract}
 We study transcendental singularities of
 a Schr\"oder map arising from a rational function $f$,
 using results from complex dynamics and Nevanlinna theory.
 These maps are transcendental meromorphic functions
 of finite order in the complex plane.
 We show that their transcendental singularities
 lie over the set where $f$ is not semihyperbolic (unhyperbolic). In addition,
 if they are direct, then they lie over only attracting periodic points of $f$, 
 and moreover, if $f$ is a polynomial, 
 then both direct and indirect singularities lie over
 attracting, parabolic and Cremer periodic points of $f$.
 We also obtain concrete examples of
 both kinds of transcendental singularities of Schr\"oder maps
 as well as a new proof of the Pommerenke-Levin-Yoccoz inequality 
 and a new formulation of the Fatou conjecture.
\end{abstract}

\maketitle


\section{Introduction}\label{sec:intro}
Let $f$ be a rational function on $\hbC$
of degree $d=\deg f\ge 2$, i.e., 
the critical set $C(f):=\{f'(c)=0\}\neq\emptyset$.
Denote its $k$-th iterate ($k\in\bN\cup\{0\}$) by $f^k$.
For details of complex dynamics, see, for example, \cite{Milnor3rd},
\cite{MorosawaTaniguchi}, \cite{Steinmetz93}. 
For every {\itshape repelling} periodic point $z_0$ of $f$
of period $p$, there exists a unique meromorphic map $h$ on $\bC$,
which is called the {\itshape Schr\"oder map of $f$ at $z_0$},
such that $h(0)=z_0$, $h'(0)=1$ and
\begin{align}
 f^{p}\circ h=h\circ\lambda\label{eq:Schroder}
\end{align}
on $\bC$. Here the multiplier $\lambda:=(f^p)'(z_0)$ ($|\lambda|>1$)
also denotes multiplication by $\lambda$ on $\bC$.
Using complex dynamics and Nevanlinna theory, 
we study the relationship between
{\itshape singularities} of Schr\"oder maps $h$
and the {\itshape unhyperbolicity} of $f$. 
Following Carleson-Jones-Yoccoz \cite{CJY94},
we say that $f$ is {\itshape not semihyperbolic} or, more conveniently,
{\itshape unhyperbolic} at $a\in\hbC$ if for every open neighborhood $U$ of $a$,
\begin{gather}
 \sup_{k\in\bN}\max_{V^{-k}}\deg(f^k:V^{-k}\to U)=\infty, \label{eq:unhyp}
\end{gather}
where $V^{-k}$ ranges over all components of $f^{-k}(U)$.
We denote by $\UH(f)$ (unhyperbolic) the set of all such $a\in\hbC$.

\begin{notation}
 $U_r(a)$ is the spherical open disk centered at  $a\in\hbC$ and of radius $r>0$.
 Let $F(f)$ and $J(f)$ be the Fatou and Julia sets of $f$, respectively,
 and let $\AT(f),\PB(f)$ and $\CM(f)$ be 
 the attracting, parabolic and Cremer periodic points of $f$, respectively.
\end{notation}

If $g$ is transcendental meromorphic on $\bC$,
we can consider more general singularities than its critical set $C(g)$:
let $\mathfrak{N}$ be the set of {\itshape decreasing} families
$\mathcal{A}=\{A_r\}_{r>0}\subset 2^{\bC}$, so that $A_s\subset A_r$ if $s<r$. 
Let $\TS(g)\subset\mathfrak{N}$ be the set of
$\mathcal{A}\in\mathfrak{N}$ such that 
there exists (the unique) $a=a_{\mathcal{A}}\in\hbC$ such that for every $r>0$,
$A_r$ is a component of $g^{-1}(U_r(a))$ and in addition
that $\bigcap_{r>0}A_r=\emptyset$ (cf.\ \cite{BE95}). 
Each $\mathcal{A}\in\TS(g)$ is called a {\itshape transcendental singularity of $g$},
and we extend $g$ to the map from $\bC\cup\TS(g)$ to $\hbC$ by 
setting $g(\mathcal{A}):=a$ for $\mathcal{A}\in\TS(g)$.
Following terminology due to Iversen \cite{Iversen14}, 
$\mathcal{A}$ is said to be {\itshape direct}
if the point $g(\mathcal{A})$ is not contained in $g(A_r)$ for some $r>0$,
and {\itshape indirect} otherwise.

For a sequence $(z_k)\subset\bC$, we say that 
$z_k\to\mathcal{A}$ as $k\to\infty$ if for each $r>0$,
$z_k\in A_r$ for all large $k\in\bN$.
Similarly, for an arc $\gamma:(-\infty,\infty)\to\bC$, 
we say that $\gamma(t)\to\mathcal{A}$ as $t\to\infty$ 
if for each $r>0$, $\gamma(t)\in A_r$ for all large $t>0$.
We call $\gamma$ an {\itshape asymptotic arc} of $g$
if $\lim_{t\to\infty}\gamma(t)=\infty$ and $\lim_{t\to\infty}g(\gamma(t))\in\hbC$ exists.
For every asymptotic arc $\gamma$ of $g$,
there is the (unique) $\mathcal{A}\in\TS(g)$ {\itshape associated with} $\gamma$,
so that $\gamma(t)\to\mathcal{A}$ as $t\to\infty$.
Conversely, for every $\mathcal{A}\in\TS(g)$,
there is an asymptotic arc $\gamma$ of $g$ to  which $\mathcal{A}$ is associated;
indeed many such arcs.

\begin{remarks}
 This definition of $\mathcal{A}$ slightly modifies the classical
 one (eg.\ in \cite[\S XI]{Nevan70}), where $g(\mathcal{A})$ 
 is called a transcendental singularity {\itshape of $g^{-1}$}.
 In the study of entire-meromorphic maps, the term
 {\itshape asymptotic curve} is more common than asymptotic arc,
 but we prefer the latter since it seems more in keeping with usage in dynamics.
\end{remarks}

For $\mathcal{A}=\{A_r\},\mathcal{B}=\{B_r\}\in\mathfrak{N}$,
we say $\mathcal{A}\sim\mathcal{B}$ if
\begin{itemize}
 \item for every $r>0$, there exists $s>0$ such that $A_s\subset B_r$,
       \label{item:pushtail}
 \item for every $r>0$, there exists $s>0$ such that $B_s\subset A_r$.
       \label{item:directindirect}
\end{itemize}
This defines an equivalence relation on $\mathfrak{N}$.

When $g$ is a Schr\"oder map $h$ as (\ref{eq:Schroder}),
we call the map $\Lambda=\Lambda_h$ below
the {\itshape natural} extension of the multiplication action of $\lambda$ on $\bC$
since from (\ref{eq:extension}), we have $h\circ\Lambda=f^p\circ h$ on $\TS(h)$.

\begin{theorem}\label{th:extension}
 Let $f$ and $h$ be as above. Then
 there exists a map $\Lambda=\Lambda_h:\TS(h)\to\TS(h)$
 such that for each $\mathcal{A}=\{A_r\}\in\TS(h)$,
 \begin{gather}
  \Lambda\mathcal{A}\sim\{\lambda A_r\}_{r>0}.\label{eq:extension}
 \end{gather}
 The map $\Lambda$ is bijective and
 preserves the direct or indirect character of $\mathcal{A}\in\TS(h)$, i.e.,
 $\mathcal{A}$ is direct if and only if $\Lambda\mathcal{A}$ is direct.
\end{theorem}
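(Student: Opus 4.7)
The plan is to define $\Lambda\mathcal{A}$ directly in terms of components and then verify each clause by combining the functional equation \eqref{eq:Schroder} with the correspondence (established in the excerpt) between asymptotic arcs of $h$ and elements of $\TS(h)$. Set $a:=h(\mathcal{A})$ and $b:=f^p(a)$. The continuity of $f^p$ at $a$ supplies a non-decreasing function $s\mapsto\delta(s)>0$ with $\delta(s)\to 0$ as $s\to 0$, such that $f^p(U_{\delta(s)}(a))\subset U_s(b)$. By \eqref{eq:Schroder} this gives $h(\lambda A_{\delta(s)})=f^p(h(A_{\delta(s)}))\subset U_s(b)$, and since $\lambda A_{\delta(s)}$ is connected and nonempty it lies in a unique component $B_s$ of $h^{-1}(U_s(b))$, independent of the specific choice of $\delta$. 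I would set $\Lambda\mathcal{A}:=\{B_s\}_{s>0}$.

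To confirm $\Lambda\mathcal{A}\in\TS(h)$, monotonicity $B_{s'}\subset B_s$ for $s'<s$ is immediate, since $B_{s'}$ is a connected subset of $h^{-1}(U_s(b))$ meeting $B_s$ via $\lambda A_{\delta(s')}\subset\lambda A_{\delta(s)}$. For the condition $\bigcap_s B_s=\emptyset$, I would pick an asymptotic arc $\gamma$ associated with $\mathcal{A}$ and note that $\lambda\gamma$ satisfies $\lambda\gamma(t)\to\infty$ and $h(\lambda\gamma(t))=f^p(h(\gamma(t)))\to b$, so it is an asymptotic arc of $h$ and determines some $\{B_s^*\}\in\TS(h)$. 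Because $\lambda\gamma(t)\in\lambda A_{\delta(s)}\subset B_s$ and $\lambda\gamma(t)\in B_s^*$ for all large $t$, the two components of $h^{-1}(U_s(b))$ coincide, so the required emptiness is inherited from the definition of $\TS(h)$.

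The equivalence $\Lambda\mathcal{A}\sim\{\lambda A_r\}$ has the inclusion $\lambda A_{\delta(r)}\subset B_r$ for free; for the reverse, the key local fact is that the component $\tilde V_s$ of $(f^p)^{-1}(U_s(b))$ containing $a$ shrinks to $\{a\}$ as $s\to 0$, since $a$ is an isolated preimage of $b$ under $f^p$. Hence given $r>0$, for $s$ small one has $\tilde V_s\subset U_r(a)$; the connected set $\lambda^{-1}B_s$ has $h$-image in $(f^p)^{-1}(U_s(b))$ meeting $\tilde V_s$ through $A_{\delta(s)}$, so by connectivity $h(\lambda^{-1}B_s)\subset\tilde V_s\subset U_r(a)$, giving $\lambda^{-1}B_s\subset A_r$. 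Bijectivity then follows by the symmetric construction: an asymptotic arc $\tilde\gamma$ for any $\mathcal{B}\in\TS(h)$ produces $\lambda^{-1}\tilde\gamma$, whose $h$-values are eventually trapped in a single component of $(f^p)^{-1}$ of a small neighborhood of $h(\mathcal{B})$ and so converge to a specific $f^p$-preimage of $h(\mathcal{B})$; the associated singularity is $\Lambda^{-1}\mathcal{B}$, and $\Lambda\Lambda^{-1}=\Id=\Lambda^{-1}\Lambda$ follows from the respective constructions.

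Finally, for preservation of the direct/indirect character: if $\mathcal{A}$ is indirect, i.e.\ $a\in h(A_r)$ for every $r$, then $b=f^p(a)\in f^p(h(A_{\delta(s)}))=h(\lambda A_{\delta(s)})\subset h(B_s)$ for every $s$, so $\Lambda\mathcal{A}$ is indirect. Conversely, if $\mathcal{A}$ is direct with $a\notin h(A_{r_0})$ but (toward contradiction) $b\in h(B_s)$ for every $s$, I would choose $s$ small enough that $B_s\subset\lambda A_{r_0}$ (using the equivalence); then any $w\in B_s$ with $h(w)=b$ satisfies $w/\lambda\in A_{r_0}$ and $h(w/\lambda)\in(f^p)^{-1}(b)\cap U_{r_0}(a)$, which reduces to $\{a\}$ for $r_0$ sufficiently small, contradicting directness. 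The main obstacle is the reverse inclusion of the equivalence in the third paragraph: it hinges on the local branched-covering structure of $f^p$ at $a$, and some care is warranted when $a$ is a critical point of $f^p$, where $\tilde V_s$ is a multiply-covered rather than univalent disk, although the shrinking property still persists.
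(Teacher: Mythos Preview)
Your proposal is correct and follows essentially the same route as the paper: define $B_s$ as the component of $h^{-1}(U_s(b))$ containing $\lambda A_{\delta(s)}$, and use the key local fact that the component of $(f^p)^{-1}(U_s(b))$ through $a$ shrinks to $\{a\}$ (equivalently, that $f^p$ is proper on a small neighborhood of $a$) to obtain the reverse inclusion in $\Lambda\mathcal{A}\sim\{\lambda A_r\}$, the direct/indirect preservation, and surjectivity. The one stylistic difference is that you invoke asymptotic arcs (via the correspondence stated in the introduction) to verify $\bigcap_s B_s=\emptyset$ and to build $\Lambda^{-1}$, whereas the paper stays entirely with component-chasing: it first proves the full equivalence $\mathcal{B}\sim\{\lambda A_r\}$ and reads off $\bigcap_s B_s=\emptyset$ from $\bigcap_r\lambda A_r=\emptyset$, and for surjectivity it constructs $\tilde{\mathcal{A}}$ directly from the nested family $V_t^{-1}$ of components of $(f^p)^{-1}(U_t(b))$ containing $h(\lambda^{-1}B_t)$. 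Your arc shortcut is legitimate, but note that your surjectivity sketch implicitly uses that $\Lambda^{-1}\mathcal{B}$ does not depend on the choice of $\tilde\gamma$; this follows because any two such arcs eventually lie in the same $B_t$, hence $\lambda^{-1}\tilde\gamma$ eventually lies in the connected set $\lambda^{-1}B_t$, whose $h$-image sits in a single component $V_t^{-1}$---exactly the component argument the paper makes explicit.
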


\begin{definition}
 An $\mathcal{A}\in\TS(h)$ is {\itshape periodic} if it is periodic under $\Lambda_h$. 
\end{definition}
Theorem \ref{th:extension} is shown by
a careful chase of the functional equation (\ref{eq:Schroder}).
For reader's convenience, we include the proof in \S \ref{sec:extension}.

In some ways, this paper may be viewed as a continuation of \cite{DOproximity},
which studies the growth with $k$ of the proximity function $m(a,f^k)$
as $a$ varies in $\hbC$. 
Thus as in \cite{DOproximity}, we consider the {\itshape omega-limit set} 
\begin{gather*}
 \omega_f(c):=\{z\in\hbC;\exists k_j\to\infty\text{ such that }\lim_{j\to\infty}f^{k_j}(c)=z\}
\end{gather*}
 for each $c\in\hbC$, and define the {\itshape Ma\~n\'e set} of $f$ as
\begin{align*}
 M(f):=\bigcup_{\substack{c\in C(f)\cap J(f)\\ \text{such that } c\in\omega_f(c)}}\omega_f(c).
\end{align*}
We will recall in \S \ref{sec:nevan} that $\CM(f)\subset M(f)$
and that $\AT(f)\cup\PB(f)\cup M(f)$ coincides with $\UH(f)$.

One of our principal results is:
\begin{mainth}\label{th:asym}
Let $h$ be a Schr\"oder map of the rational function $f$. Then
\begin{gather}
 h(\TS(h))\subset\AT(f)\cup\PB(f)\cup M(f),\label{eq:unhyperbolic}\\
 h(\{\mathcal{A}\in\TS(h);\text{periodic}\})
 \subset\AT(f)\cup\PB(f)\cup\CM(f),\label{eq:periodic}\\
 h(\{\mathcal{A}\in\TS(h);\text{direct}\})\subset\AT(f).\label{eq:directasym}
\end{gather}
\end{mainth}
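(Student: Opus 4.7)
The plan is to establish the three inclusions in sequence, each bootstrapping from the previous, using the Schr\"oder equation $h\circ\lambda=f^p\circ h$ together with its extension $h\circ\Lambda=f^p\circ h$ on $\TS(h)$ (Theorem \ref{th:extension}) as the central bridge between the $\lambda$-scaling on $\bC$ and the $f$-dynamics on $\hbC$.

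\medskip

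\noindent\emph{Step 1, proof of \textnormal{(\ref{eq:unhyperbolic})}.} I would argue by contraposition. Assume $\mathcal{A}=\{A_r\}\in\TS(h)$ with $a:=h(\mathcal{A})\notin\UH(f)$. Then $f$ is semihyperbolic at $a$, giving $r_0>0$ and $D<\infty$ with $\deg(f^k\colon V^{-k}\to U_{r_0}(a))\le D$ for every component $V^{-k}$ of $f^{-k}(U_{r_0}(a))$. The Ma\~n\'e--Carleson--Jones--Yoccoz shrinking lemma then yields $\max_{V^{-k}}\diam V^{-k}\to 0$ as $k\to\infty$. Fix $r<r_0$ with $z_0\notin\overline{U_r(a)}$. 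For each $k$, $\lambda^{-k}A_r$ is connected and unbounded, and the Schr\"oder equation forces $h(\lambda^{-k}A_r)$ to lie in a single component $V_k$ of $f^{-pk}(U_r(a))$, whose diameter tends to $0$. On the other hand, since $A_r$ is connected, unbounded and stays at positive distance from $0$, I can select two points $z,z'\in A_r$ with $|z|,|z'|\le\delta|\lambda|^k$ and $|z-z'|\gtrsim\delta|\lambda|^k$ for a fixed small $\delta>0$ and every large $k$. Then $\lambda^{-k}z,\lambda^{-k}z'$ both lie in $B(0,\delta)$, and the conformality $h(w)=z_0+w+O(w^2)$ at $0$ forces $|h(\lambda^{-k}z)-h(\lambda^{-k}z')|\gtrsim\delta$, contradicting $h(\lambda^{-k}A_r)\subset V_k$ with $\diam V_k\to 0$.

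\medskip

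\noindent\emph{Step 2, proof of \textnormal{(\ref{eq:periodic})}.} If $\Lambda^n\mathcal{A}=\mathcal{A}$, the extended Schr\"oder relation gives $f^{pn}(a)=a$, so $a$ is periodic under $f$; combined with $a\in\UH(f)$ from Step 1 and the exclusion of Siegel periodic points (which are semihyperbolic), the only possibility outside $\AT(f)\cup\PB(f)\cup\CM(f)$ is a repelling periodic $a\in M(f)$. To eliminate this (the Pommerenke--Levin--Yoccoz phenomenon), I would introduce the Poincar\'e function $\psi$ at $a$ satisfying $\psi\circ\nu=f^{n'}\circ\psi$ with $|\nu|>1$, and factor $h=\psi\circ G$ locally on $A_r$; the Schr\"oder equation then reads $G(\lambda^n z)=\nu^l G(z)$ with $pn=n'l$. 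Since $|\nu^l|>1$ forces $|G(\lambda^{jn}z)|$ to grow geometrically while $G$ sends $A_r$ into small neighborhoods of the discrete set $\psi^{-1}(a)$, matching the forced growth against the spacing of $\psi^{-1}(a)$ yields the required contradiction.

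\medskip

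\noindent\emph{Step 3, proof of \textnormal{(\ref{eq:directasym})}.} A standard Fatou-coordinate estimate shows the Schr\"oder map $h$ is meromorphic of finite order, so Nevanlinna's theorem (equivalently Denjoy--Carleman--Ahlfors's) bounds the number of direct transcendental singularities of $h$. Since $\Lambda$ is a bijection of $\TS(h)$ preserving directness (Theorem \ref{th:extension}), every direct $\mathcal{A}$ lies on a finite $\Lambda$-orbit, hence is periodic, so Step 2 places $a$ in $\AT(f)\cup\PB(f)\cup\CM(f)$. To exclude the parabolic and Cremer cases, I would set $v:=-\log|h-a|$ on a tract $A_r$ on which $h$ omits $a$ (using directness); the Schr\"oder equation translates into $v(\lambda^n z)=v(z)-\log|\mu|+o(1)$ with $\mu=(f^{pn})'(a)$. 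For $|\mu|=1$ the main term $-\log|\mu|$ vanishes and $v$ becomes nearly $\lambda^n$-invariant; a Fatou-flower petal analysis (parabolic) and a non-linearizability argument (Cremer) show this near-invariance is incompatible with $v\to+\infty$ at $\mathcal{A}$, leaving only $|\mu|<1$, i.e., $a\in\AT(f)$.

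\medskip

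\noindent\emph{Main obstacle.} The hardest step will be Step 2---eliminating repelling periodic $a\in M(f)$ as images of periodic transcendental singularities---which is essentially the Pommerenke--Levin--Yoccoz inequality extracted from the scaling identity $G(\lambda^n z)=\nu^l G(z)$, linking the analytic growth of $G$ with the discrete geometry of $\psi^{-1}(a)$. Steps 1 and 3 are comparatively routine once the shrinking lemma (Step 1) and the functional equation for $v$ (Step 3) are correctly set up.
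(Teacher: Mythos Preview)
Your Step~1 has a genuine gap: the shrinking conclusion $\max_{V^{-k}}\diam V^{-k}\to 0$ does \emph{not} follow from semihyperbolicity alone. Ma\~n\'e's theorem (Theorem~\ref{th:recurrent}) supplies it only for $a\in J(f)\setminus(\PB(f)\cup M(f))$; when $a$ lies in a Siegel disk or Herman ring the pullbacks $V^{-k}$ are rotated copies of $U_r(a)$ and do not shrink, so your contradiction evaporates. The paper treats this case by first reducing (\ref{eq:unhyperbolic}) to the non-periodic \emph{indirect} situation (having already proved (\ref{eq:periodic}) and (\ref{eq:directasym})), and then showing via the Schr\"oder equation that if all the Fatou components $D^{-k}$ were rotation domains then $h|A_r$ would be univalent---contradicting Bergweiler--Eremenko (Theorem~\ref{th:indirect}). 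Your bootstrapping order $(\ref{eq:unhyperbolic})\Rightarrow(\ref{eq:periodic})\Rightarrow(\ref{eq:directasym})$ is thus the reverse of what actually works.

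Your Step~3 also fails at the decisive point. The relation $v(\lambda^n z)=v(z)-\log|\mu|+o(1)$ with $|\mu|=1$ is fully compatible with $v\to+\infty$: at a parabolic point $|f^{pnk}(w)-a|\asymp k^{-1/q}$, so $v(\lambda^{nk}z)\sim(1/q)\log k\to\infty$, and no contradiction arises from near-invariance; the hand-wave toward ``petal analysis'' and ``non-linearizability'' does not supply one. The paper's key tool here, which you are missing entirely, is P\'erez-Marco's Siegel compacta (Theorem~\ref{th:hedgehog}): for $a\in\PB(f)\cup\CM(f)$ one obtains a continuum $K_t\subset U_t$ with $f^{-1}_t(K_t)=K_t$, and then directness (via Lemma~\ref{th:directunique}, itself a consequence of Denjoy--Carleman--Ahlfors) forces every component $L\subset A_t$ of $h^{-1}(K_t)$ to satisfy $\lambda^{-k}L\subset A_r$ for all $k$, contradicting $A_r\subset\{|w|\ge 1\}$. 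Incidentally, your Step~2 is vastly overcomplicated and the global factorization $h=\psi\circ G$ on $A_r$ is not well defined: the repelling (and Siegel) case is actually the \emph{easiest}, dispatched in three lines because the local inverse branch $f^{-1}_r$ fixing $a$ keeps $V^{-k}_t\subset U_r$, whence $\lambda^{-k}A_t\subset A_r$ for all $k$---the same contradiction.
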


In general, the inclusion (\ref{eq:directasym}) is proper. 
As an example, we have: 
\begin{mainth}\label{th:attpara}
 Let $h$ be a Schr\"oder map of the rational function $f$
 at a repelling fixed point $z_0$ of $f$ of multiplier $\lambda$,
 let $D$ an immediate basin of $a\in\AT(f)\cup\PB(f)$,
 and suppose that a component $W$ of $h^{-1}(D)$ is periodic $($under $\lambda)$
 in that $\lambda^N W=W$ for some $N\in\bN$ $($so $f^N(D)=D)$.
 Then for every $w_0\in W$, there is an asymptotic arc 
 $\gamma:(-\infty,\infty)\to W$ of $h$ with $\gamma(0)=w_0$ and
 $\lim_{t\to\infty}h(\gamma(t))=a$ such that for every $t\in(-\infty,\infty)$,
 \begin{gather}
  \gamma(t+1)=\lambda^N\gamma(t).\label{eq:translation}
 \end{gather} 
 Suppose that $a\in\AT(f)$.
 If $f^{-N}(a)\cap D\neq\{a\}$, then $W\cap h^{-1}(a)\neq\emptyset$. 
 If $\gamma(0)=w_0\in W\cap h^{-1}(a)$,
 then $\mathcal{A}\in\TS(h)$ associated with this $\gamma$
 is indirect.
 If there exists an indirect $\mathcal{A}=\{A_r\}\in\TS(h)$ 
 with $h(\mathcal{A})=a$ and $A_r\subset W$ for some $r>0$,
 then $D\cap C(f^N)\neq\{a\}$.
\end{mainth}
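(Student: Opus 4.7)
\noindent\emph{Construction of $\gamma$.} I treat the four parts of the theorem in order; Theorem~\ref{th:asym} enters non-trivially only in (ii). Since $W$ is open, connected and $\lambda^N W=W$, both $w_0$ and $\lambda^N w_0$ lie in $W$, and I pick any continuous $\gamma_0\colon[0,1]\to W$ joining them. Extending by $\gamma(t):=\lambda^{Nk}\gamma_0(t-k)$ for $t\in[k,k+1]$, $k\in\bZ$, yields a continuous $\gamma\colon\bR\to W$ with $\gamma(t+1)=\lambda^N\gamma(t)$ and $|\gamma(t)|\to\infty$. Iterating the Schr\"oder equation gives $h(\gamma(t))=f^{Npk}(h(\gamma_0(t-k)))$ on $[k,k+1]$; since $D$ is the immediate basin of the attracting or parabolic $a$, the compact arc $h(\gamma_0([0,1]))\subset D$ is uniformly contracted to $a$ by $(f^{Np})^k|_D$, so $h(\gamma(t))\to a$.

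\smallskip
\noindent\emph{An $a$-point in $W$.} Pick $b\in f^{-N}(a)\cap D$ with $b\neq a$. If $b\in h(W)$, any $w_1\in h^{-1}(b)\cap W$ yields $\lambda^N w_1\in W$ with $h(\lambda^N w_1)=f^{Np}(b)=(f^N)^p(b)=a$, as required. Suppose instead $b\notin h(W)$. The non-empty open set $h(W)\subset D$ is forced dense in the connected $D$ (else $\overline{h(W)}$ and its complement would disconnect $D$), so $b\in\partial_D h(W)$; pick $w_n\in W$ with $h(w_n)\to b$. A bounded subsequence $w_n\to w^*\in\bC$ would give $h(w^*)=b\in D$, so $w^*\in h^{-1}(D)$; since $h^{-1}(D)$ is open and $w_n\in W$, $w^*$ must lie in the same component $W$, giving $b\in h(W)$—impossible. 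Hence $w_n\to\infty$, and the standard construction produces $\mathcal{A}_b=\{A_r\}\in\TS(h)$ with $h(\mathcal{A}_b)=b$ and $A_r\subset W$ for small $r$. Theorem~\ref{th:asym} then places $b\in\AT(f)\cup\PB(f)\cup M(f)$; but $b\in D\subset F(f)$ rules out $M(f)\subset J(f)$, and $b$ is strictly preperiodic (a periodic orbit through $b$ would contain $a$ and hence equal the orbit of $a$, forcing $b=a$), so $b\notin\AT(f)\cup\PB(f)$—contradiction.

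\smallskip
\noindent\emph{Indirectness and critical-point forcing.} For (iii): if $w_0\in W\cap h^{-1}(a)$, then $\gamma(k)=\lambda^{Nk}w_0$ satisfies $h(\gamma(k))=f^{Npk}(a)=a$ and $\gamma(k)\to\mathcal{A}$, so $A_r\cap h^{-1}(a)\neq\emptyset$ for every $r>0$; by definition $\mathcal{A}$ is indirect. For (iv), I argue contrapositively: assume $D\cap C(f^N)=\{a\}$. This forces $a\in C(f^N)$, so $a$ is super-attracting for $f^N$ with local degree $m\ge 2$; Riemann--Hurwitz applied to $f^N|_D\colon D\to D$ (unique critical point $a$ of multiplicity $m$) gives $\chi(D)=1$ and $\deg(f^N|_D)=m$, whence $(f^N)^{-1}(a)\cap D=\{a\}$ set-theoretically and, by iteration, $(f^{Np})^{-1}(a)\cap D=\{a\}$. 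If some $w_0\in W\cap h^{-1}(a)$ existed, the $\langle\lambda^N\rangle$-invariance $\lambda^{-N}W=W$ would produce $w_k:=\lambda^{-Nk}w_0\in W$, and $f^{Np}(h(w_k))=h(\lambda^N w_k)=h(w_{k-1})$ combined with $h(w_k)\in D$ would inductively yield $h(w_k)\in f^{-Np}(a)\cap D=\{a\}$, so $h(w_k)=a$ for all $k\ge 0$. But $w_k\to 0$ while $h(0)=z_0\in J(f)\setminus\{a\}$, so $h(w_k)\to z_0\neq a$, a contradiction. Hence $W\cap h^{-1}(a)=\emptyset$, and any $\mathcal{A}\in\TS(h)$ over $a$ with $A_r\subset W$ satisfies $A_r\cap h^{-1}(a)=\emptyset$, i.e., $a\notin h(A_r)$, and is direct.

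\smallskip
\noindent\emph{Main obstacle.} The non-routine step is (ii), where Theorem~\ref{th:asym} is used: the delicate points are the open-component argument excluding bounded accumulation and the recognition of $b$ as neither periodic nor in $M(f)$. Steps (i), (iii), and (iv) are then direct consequences of the Schr\"oder equation; in particular (iv) reduces, once Riemann--Hurwitz supplies the simply-connected super-attracting structure, to the elementary dynamical argument above, without needing any Nevanlinna-theoretic input of Pommerenke--Levin--Yoccoz type.
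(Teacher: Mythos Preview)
Parts (i) and (iii) are correct and match the paper. Part (iv) is also correct but follows a genuinely different route: instead of invoking Bergweiler--Eremenko (Theorem~\ref{th:indirect}) to locate a critical point of $h$ inside $A_r$ and then unwinding via the chain rule, you argue contrapositively through Riemann--Hurwitz. This is a pleasant alternative that avoids the Nevanlinna-theoretic input; the one point to tighten is that applying Riemann--Hurwitz to $f^N|_D\colon D\to D$ presupposes that $D$ has finite Euler characteristic, which deserves a word of justification (for instance, the unbranched cover $D\setminus (f^N)^{-1}(a)\to D\setminus\{a\}$ together with the Nielsen--Schreier rank formula forces $\pi_1(D)$ to have finite rank, in fact rank~$0$). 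Also, since $z_0$ is a \emph{fixed} point the period is $p=1$, so your factors of $p$ in $f^{Np}$ are superfluous.

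The genuine gap is in part (ii). Your assertion that $h(W)$ is dense in $D$ because ``else $\overline{h(W)}$ and its complement would disconnect $D$'' is simply wrong: a nonempty proper open subset of a connected domain need not be dense, since $\overline{h(W)}$ is closed but in general not open and hence yields no separation. Without density you cannot place your chosen $b$ in $\overline{h(W)}$, the sequence $(w_n)$ need not exist, and the appeal to Theorem~\ref{th:asym} collapses. The paper does not use Theorem~\ref{th:asym} here at all. Instead it proves $\#(D\setminus h(W))<\infty$: over each omitted value one builds a transcendental singularity of $h$ lying in $W$, necessarily \emph{direct} since that value is not taken by $h|_W$, and then the Denjoy--Carleman--Ahlfors bound (Theorem~\ref{th:DCA}) caps the number of direct singularities. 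Since the backward orbit $\bigcup_{n}(f^N|_D)^{-n}(a)$ is infinite under the hypothesis $f^{-N}(a)\cap D\neq\{a\}$, it meets $h(W)$, and the Schr\"oder relation then pushes forward to a point of $W\cap h^{-1}(a)$. In particular the density of $h(W)$ you wanted is a \emph{consequence} of this finiteness, so your route through Theorem~\ref{th:asym} does not sidestep the DCA step.
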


Replacing $f$ by $f^n$ for an appropriate $n\in\bN$,
we may apply Theorem \ref{th:attpara} to every immediate basin $D$
of $a\in\AT(f)\cup\PB(f)$, 
by a theorem of Przytycki-Zdunik \cite[Theorem A]{PZ94}
(see also Pommerenke \cite[\S 2]{Pommerenke86} when
$D$ is simply connected):
{\itshape $\partial D$ contains a dense subset of
repelling periodic points $z_0$ of $f$ 
accessible from $D$ along an arc $\Gamma=\Gamma_{z_0}:[0,1]\to\overline{D}$
such that $\Gamma(0)=z_0$, $\Gamma((0,1])\subset D$ and 
$\Gamma\subset f^n(\Gamma)$ for some $n\in\bN$.}
We note that $z_0$ is {\itshape fixed by} $f^n$, and put $\lambda:=(f^n)'(z_0)$.
For all small $s>0$, the Schr\"oder map $h$ of $f^n$ at $z_0$ is univalent on $\{|w|<s\}$,
and we can assume that $\Gamma\subset(f^n)(\Gamma)\subset h(\{|w|<s\})$.
Choose $W$ as $(h|\{|w|<s\})^{-1}(\Gamma)\subset W$.
Then we have $W=\lambda W$.

\begin{maincoro}\label{th:attparaconverse}
 For every $a\in\AT(f)\cup\PB(f)$ of the rational function $f$,
 there exists a Schr\"oder map $h$ of $f$ with $a\in h(\TS(h))$.
\end{maincoro}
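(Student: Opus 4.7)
The plan is to apply Theorem \ref{th:attpara} directly to the configuration built in the paragraph immediately preceding the corollary, and then to convert the resulting asymptotic arc into the desired element of $\TS(h)$ lying over $a$.

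In detail: given $a \in \AT(f)\cup\PB(f)$, I will first fix an immediate basin $D$ of $a$, and replace $f$ by a suitable iterate $f^n$ so that $D$ becomes invariant. The Przytycki--Zdunik accessibility theorem then supplies a repelling periodic point $z_0\in\partial D$ fixed by $f^n$, together with an arc $\Gamma:[0,1]\to\overline D$ landing at $z_0$ and satisfying $\Gamma\subset f^n(\Gamma)$. With $\lambda:=(f^n)'(z_0)$, let $h$ be the Schröder map of $f$ at $z_0$; this is equivalently the Schröder map of $f^n$ at $z_0$, since the Schröder equation $f^p\circ h=h\circ\lambda$ is preserved when $p$ is replaced by any positive multiple of the exact period. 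For small $s>0$ the map $h$ is univalent on $\{|w|<s\}$, and after shrinking $s$ one can arrange $\Gamma\cup f^n(\Gamma)\subset h(\{|w|<s\})$. Taking $W$ to be the component of $h^{-1}(D)$ containing the lift $(h|\{|w|<s\})^{-1}(\Gamma)$, the preceding paragraph already asserts $\lambda W=W$. Thus the hypotheses of Theorem \ref{th:attpara} hold with $N=1$.

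Theorem \ref{th:attpara} then yields, for any $w_0\in W$, an asymptotic arc $\gamma:(-\infty,\infty)\to W$ of $h$ with $\gamma(t+1)=\lambda\gamma(t)$ and $\lim_{t\to\infty}h(\gamma(t))=a$; the translation rule together with $|\lambda|>1$ ensures $\gamma(t)\to\infty$ as $t\to\infty$, so $\gamma$ is genuinely an asymptotic arc. The discussion preceding Theorem \ref{th:extension} then associates to $\gamma$ a unique $\mathcal{A}\in\TS(h)$, and $\gamma(t)\to\mathcal{A}$ forces $h(\gamma(t))\in h(A_r)\subset U_r(h(\mathcal{A}))$ for all large $t$, whence $h(\mathcal{A})=a$. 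Therefore $a\in h(\TS(h))$, which is the claim.

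There is no serious obstacle here: the geometric content has been delegated on the one hand to Theorem \ref{th:attpara} (which provides the asymptotic arc inside $W$) and on the other to the Przytycki--Zdunik theorem (which provides the accessible repelling periodic point on $\partial D$ and the invariant arc $\Gamma$). The only item worth flagging is the subtlety that the Schröder map at a periodic point satisfies $f^p\circ h=h\circ\lambda$ with $p$ equal to any positive multiple of the exact period, so that invoking Theorem \ref{th:attpara} for the iterate $f^n$ nevertheless produces a Schröder map of $f$, as the corollary requires.
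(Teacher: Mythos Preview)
Your proposal is correct and follows essentially the same route as the paper: the corollary is deduced from Theorem \ref{th:attpara} via the Przytycki--Zdunik accessibility result exactly as in the paragraph preceding the statement, and you correctly pass from the asymptotic arc to the associated $\mathcal{A}\in\TS(h)$ with $h(\mathcal{A})=a$. Your remark that the Schr\"oder map of $f^n$ at $z_0$ coincides with the Schr\"oder map of $f$ at $z_0$ (since the exact period $p$ of $z_0$ divides $n$ and the functional equation iterates) is the right justification for why the produced $h$ is indeed ``a Schr\"oder map of $f$'' as required; the paper leaves this implicit.
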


We apply these results to two concrete dynamical issues.

\bold{Pommerenke-Levin-Yoccoz inequality.}
Suppose that the rational function $f$ is a polynomial and
that $p=1$, i.e., $f(z_0)=z_0$. Then $h$ is an entire function
of order $\rho=(\log d)/\log|\lambda|$ (see \S \ref{sec:nevan}).
Let $D_{\infty}$ be the immediate basin of $\infty$,
and let $q_{\infty}=q_{\infty}(h)$ be the number of components of $h^{-1}(D_{\infty})$.
Eremenko and Levin \cite{EremenkoLevin89} proved that $q_{\infty}\le 2\rho\vee 1$,
that every component of $h^{-1}(D_{\infty})$ is periodic under $\lambda$ and
(\cite[p.\ 1260]{EremenkoLevin89}) that
if $J_0$ is the component of $J(f)$ with $z_0\in J_0$ and
\begin{gather}
 J_0\neq\{z_0\}\tag{EL}\label{eq:ELcondition}
\end{gather}
(eg., if there are at least two components of $h^{-1}(D_{\infty})$), then
\begin{gather}
 q_{\infty}\le 2\rho.\label{eq:EL}
\end{gather}

A {\itshape spiral version} of Denjoy's conjecture (see Theorem \ref{th:spiral} below),
which was considered by Ahlfors \cite{Ahlfors30}
and proved unambiguously by Hayman/Jenkins
(cf.\ \cite[Theorem 8.21]{Hayman89}, \cite{Jenkins87})
establishes a refinement of (\ref{eq:EL}):
condition (\ref{eq:ELcondition}) implies that
for every asymptotic arc $\gamma:(-\infty,\infty)\to\bC$ of $h$
with $\lim_{t\to\infty}h(\gamma(t))=\infty$,
\begin{gather}
 q_{\infty}\cdot\left(1+\limsup_{t\to\infty}\left(\frac{\arg\gamma(t)}{\log|\gamma(t)|}\right)^2\right)\le 2\rho.\label{eq:general}
\end{gather}

As a special case, (\ref{eq:general}) has a {\itshape dynamical} implication:
for each component $W$ of $h^{-1}(D_{\infty})$,
let $q_W$ be the least $N\in\bN$ such that $\lambda^NW=W$, and
let $\gamma_W:(-\infty,\infty)\to W$ be an asymptotic arc of $h$ 
obtained by Theorem \ref{th:attpara}.
Then by (\ref{eq:translation}), for every $k\in\bN$, we have
\begin{gather}
 \gamma_W(k)=\lambda^{k\cdot q_W}\gamma_W(0).\label{eq:orbits}
\end{gather}
If (\ref{eq:ELcondition}) holds, then
we can define a single-valued branch $\arg_W(\cdot)$ of $\arg(\cdot)$ on $W$,
and there exists a (unique) $p_W\in\{0,1,\ldots,q_W-1\}$
such that for some branch of $\arg\lambda$,
\begin{gather}
 \arg_W(\lambda^{q_W}w)-\arg_W(w)=q_W\arg\lambda-2\pi p_W\label{eq:argument}
\end{gather}
for every $w\in W$. We can also show that both $q=q_W$ and $p=p_W$ are independent of $W$
(see the discussion of (\ref{eq:cyclic}) below). Therefore,

\begin{maincoro}\label{th:PLY}
Let $h$ be a Schr\"oder map of a polynomial $f$ of degree $d\ge 2$
at a repelling fixed point $z_0$ of $f$ having multiplier $\lambda$.
Assume $($\ref{eq:ELcondition}$)$, and let $q_{\infty},p,q$ be as above. Then 
there is a branch of $\arg\lambda$ so that
\begin{gather}
 q_{\infty}\cdot\left(1+\left(\frac{\arg\lambda-2\pi p/q}{\log|\lambda|}\right)^2\right)\le 2\rho=2\frac{\log d}{\log|\lambda|}.\label{eq:ELPLY}
\end{gather}
\end{maincoro}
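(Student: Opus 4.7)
The plan is to apply Theorem \ref{th:attpara} in each component $W$ of $h^{-1}(D_{\infty})$ and then substitute the resulting asymptotic arc into the spiral inequality (\ref{eq:general}). Since $\infty\in\AT(f)$ for a polynomial $f$, and since, by Eremenko-Levin, every component $W$ of $h^{-1}(D_{\infty})$ is periodic under $\lambda$, Theorem \ref{th:attpara} supplies an asymptotic arc $\gamma_W:(-\infty,\infty)\to W$ with $\lim_{t\to\infty}h(\gamma_W(t))=\infty$ satisfying the translation identity $\gamma_W(t+1)=\lambda^{q_W}\gamma_W(t)$ from (\ref{eq:translation}).

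Next, I would use this translation identity to read off the asymptotic slope of $\gamma_W$ in the logarithmic spiral coordinates appearing in (\ref{eq:general}). Taking moduli yields $\log|\gamma_W(t+1)|-\log|\gamma_W(t)|=q_W\log|\lambda|$, whence $\log|\gamma_W(t)|=tq_W\log|\lambda|+b_W(t)$ with $b_W$ bounded and $1$-periodic. Fixing the branch $\arg_W$ of $\arg$ on $W$ provided by (\ref{eq:ELcondition}) and applying (\ref{eq:argument}) at $w=\gamma_W(t)$ similarly gives $\arg_W\gamma_W(t)=t(q_W\arg\lambda-2\pi p_W)+c_W(t)$, with $c_W$ bounded and $1$-periodic, for the branch of $\arg\lambda$ selected by (\ref{eq:argument}). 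Dividing these two asymptotics and noting that the bounded corrections disappear after division by $\log|\gamma_W(t)|\to\infty$ produces the limit (not merely the limsup)
\[
\lim_{t\to\infty}\frac{\arg\gamma_W(t)}{\log|\gamma_W(t)|}=\frac{\arg\lambda-2\pi p_W/q_W}{\log|\lambda|},
\]
where the continuous lift of $\arg$ along $\gamma_W$ implicit in (\ref{eq:general}) differs from $\arg_W\circ\gamma_W$ only by an additive constant that likewise vanishes in the limit.

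Substituting this limit into (\ref{eq:general}), together with the independence of $p=p_W$ and $q=q_W$ from the choice of $W$ (which is established in the paper's later discussion), then yields the asserted inequality (\ref{eq:ELPLY}) directly. The main subtlety of the argument is the bookkeeping of branches: one must keep the branch $\arg_W$ used in (\ref{eq:argument}) aligned with the continuous argument of $\gamma_W$ tacitly present in (\ref{eq:general}), and then verify that the branch of $\arg\lambda$ appearing in (\ref{eq:ELPLY}) is precisely the one selected by (\ref{eq:argument}). Once these branch choices are made consistent, the corollary reduces to a direct substitution into the Hayman-Jenkins spiral form of Denjoy's conjecture.
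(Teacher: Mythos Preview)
Your proposal is correct and follows the same route as the paper's proof: obtain the asymptotic arc $\gamma_W$ from Theorem~\ref{th:attpara}, use the translation identity (\ref{eq:translation}) to compute the limiting ratio $\arg\gamma_W(t)/\log|\gamma_W(t)|$, and substitute into the spiral inequality. Your computation of the limit matches the paper's, which does it only along the integer subsequence (\ref{eq:orbits}) rather than for all $t$, but this is the same idea.

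The one point the paper spends effort on that you pass over is the \emph{justification} of (\ref{eq:general}) itself. You treat (\ref{eq:general}) as given, but in the paper it is derived inside the proof of Corollary~\ref{th:PLY} from the abstract spiral theorem (Theorem~\ref{th:spiral}), and that theorem has a hypothesis---non-annularity of the singularity when $q'=1$---that must be checked. The paper verifies this by showing that the component $\tilde{J_0}$ of $h^{-1}(J(f))$ containing $0$ is unbounded under (\ref{eq:ELcondition}), which forces each $(A_r^W)^*(0)$ to be unbounded and also guarantees the existence of the single-valued branch $\arg_W$. Since the introduction already asserts (\ref{eq:general}) as a consequence of (\ref{eq:ELcondition}) and you explicitly invoke that assumption, your reduction is legitimate; just be aware that this non-annularity check is where (\ref{eq:ELcondition}) actually does its work in the spiral estimate.
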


Inequality (\ref{eq:ELPLY}) was shown
by Pommerenke \cite{Pommerenke86} and Levin \cite{Levin91}
in somewhat weaker form,
by Yoccoz (unpublished) in an equivalent form to (\ref{eq:ELPLY})
under the assumption $J_0=J(f)$,
and by Jin \cite{Jin03} under (\ref{eq:ELcondition}).

\bold{Fatou conjecture.}
We consider the {\itshape unicritical} polynomial family
\begin{gather*}
 \{f_c(z)=z^d+c;c\in\bC\}\cong\bC, 
\end{gather*}
and note that $C(f_c)\cap\bC=\{0\}$ while $\infty\in\AT(f_c)$.
The {\itshape Mandelbrot set} and
its {\itshape hyperbolicity locus} are defined as
\begin{align*}
 &\mathcal{C}:=\{c\in\bC;\lim_{k\to\infty}|f_c^k(0)|\neq\infty\},
 &\mathcal{H}:=\{c\in\bC;\AT(f_c)\cap\bC\neq\emptyset\}
\end{align*}
respectively. It is known that $\mathcal{H}$ is an open and closed subset of 
$\Int\mathcal{C}$ (\cite[Theorem 4.4]{McMullen:renorm}).

We say that a covering selfmap $g$ of $\bC$, 
which is possibly ramified and not surjective, 
covers a point $a\in\bC$ {\itshape completely}
if there exists $r>0$ such that $g^{-1}(U_r(a))$ has no unbounded component;
$g$ itself is {\itshape complete} if it covers all $a\in\bC$ completely
(cf. \cite[I.\ 21A]{AhlforsSario}).

\begin{maincoro}\label{th:hypdense}
 Let $c\in\Int\mathcal{C}$. Then $c\not\in\mathcal{H}$ if and only if
 every Schr\"oder map of $f_c$ is a complete covering selfmap of $\bC$.
\end{maincoro}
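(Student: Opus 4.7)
The plan is to prove each direction separately, combining Theorem \ref{th:asym}, its polynomial-specific strengthening announced in the abstract ($h(\TS(h)) \subset \AT(f) \cup \PB(f) \cup \CM(f)$ when $f$ is a polynomial), and Corollary \ref{th:attparaconverse}. The preliminary observation is that, since $f_c$ is a polynomial, a Schr\"oder map $h$ of $f_c$ is entire, and being a complete covering selfmap of $\bC$ is equivalent to $h(\TS(h)) \cap \bC = \emptyset$: the existence of an unbounded component of $h^{-1}(U_r(a))$ for all $r > 0$ is exactly the content of a transcendental singularity of $h$ over $a$.

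The easier direction ``completeness of every Schr\"oder map $\Rightarrow c \notin \mathcal{H}$'' I would handle by contraposition via Corollary \ref{th:attparaconverse}. If $c \in \mathcal{H}$, pick a finite attracting periodic point $a \in \AT(f_c) \cap \bC$; then Corollary \ref{th:attparaconverse} yields a Schr\"oder map $h$ of $f_c$ with $a \in h(\TS(h))$, which therefore has a transcendental singularity over the finite value $a$ and hence does not cover $a$ completely.

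For the forward direction, assume $c \in \Int\mathcal{C} \setminus \mathcal{H}$ and let $h$ be an arbitrary Schr\"oder map of $f_c$. Because $c \notin \mathcal{H}$, no attracting cycle of $f_c$ lies in $\bC$, so $\AT(f_c) = \{\infty\}$. Because $c \in \Int\mathcal{C}$, the map $f_c$ is $J$-stable; parabolic and Cremer cycles are unstable under parameter perturbation and force $c \in \partial\mathcal{C}$, giving $\PB(f_c) = \CM(f_c) = \emptyset$. Applying the polynomial refinement of Theorem \ref{th:asym} then yields $h(\TS(h)) \subset \AT(f_c) \cup \PB(f_c) \cup \CM(f_c) = \{\infty\}$, which is exactly the completeness of $h$.

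The real obstacle is the forward direction. From Theorem \ref{th:asym} alone we only get $h(\TS(h)) \subset \AT(f_c) \cup \PB(f_c) \cup M(f_c)$, and for $c$ in a (conjecturally empty) queer component of $\Int\mathcal{C} \setminus \mathcal{H}$, the critical point $0$ is recurrent in $J(f_c)$, so $M(f_c) \cap \bC$ is nonempty; this bound cannot on its own rule out finite asymptotic values. Replacing $M(f_c)$ by $\CM(f_c) = \emptyset$ via the polynomial-specific refinement (proved separately in the paper, though not in this excerpt) is the decisive input, and once it is in hand the deduction of the corollary is immediate.
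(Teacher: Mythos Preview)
Your proof has a genuine gap in the ``preliminary observation.'' You assert that for an entire Schr\"oder map $h$, being a complete covering selfmap of $\bC$ is equivalent to $h(\TS(h))\cap\bC=\emptyset$, justifying this by saying that ``the existence of an unbounded component of $h^{-1}(U_r(a))$ for all $r>0$ is exactly the content of a transcendental singularity of $h$ over $a$.'' Only one implication here is easy: if some $\mathcal{A}=\{A_r\}\in\TS(h)$ lies over $a\in\bC$, then each $A_r$ must be unbounded (a bounded $A_{r_0}$ would force $\bigcap_{r>0} A_r\neq\emptyset$ by a nested-compacta argument), so $h$ fails to cover $a$ completely. The converse, however, is \emph{not} definitional. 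Failure to cover $a$ completely means only that for every $r>0$ \emph{some} component of $h^{-1}(U_r(a))$ is unbounded; it does not hand you a \emph{nested} family $\{A_r\}_{r>0}$ with empty intersection. An unbounded component at radius $r$ may split into infinitely many bounded pieces at every smaller radius, and a naive K\"onig-type selection is blocked unless one already knows finiteness of the unbounded components at each level. Producing a genuine $\mathcal{A}\in\TS(h)$ over $a$ from incompleteness is precisely the content of Theorem~\ref{th:Kysymus}, which the paper states and proves separately in \S\ref{sec:kysymus} via harmonic-measure estimates and the two-constants theorem; the paper stresses that the finite-order hypothesis is essential there.

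On the dynamical side your argument is fine and matches the paper's route. What you call the ``polynomial refinement'' is exactly Corollary~\ref{th:postcrit}~(\ref{item:periodic}) (entire $\Rightarrow$ every $\mathcal{A}\in\TS(h)$ is periodic) combined with (\ref{eq:periodic}); the paper uses these together with the absence of indifferent cycles for $c\in\Int\mathcal{C}$ to obtain $h(\TS(h))\subset\AT(f_c)$, and handles the converse via Corollary~\ref{th:attparaconverse}, just as you do. So the obstacle you flag at the end (replacing $M(f_c)$ by $\CM(f_c)$) is not where the real work lies; the missing ingredient is Theorem~\ref{th:Kysymus}.
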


We remark that it has been expected for a long time that
\begin{gather*}
 \Int\mathcal{C}=\mathcal{H};\label{eq:complexFatou}
\end{gather*} 
this is known as a {\itshape Fatou conjecture}
(cf.\ \cite[p.\ 58]{McMullen:renorm}).
Perhaps our characterization of $\mathcal{H}$
might be helpful in understanding this conjecture.

\section{Dynamical and Nevanlinna-theoretic results}\label{sec:nevan}
Let $f$ be a rational function on $\hbC$ of degree $d\ge 2$.

\bold{Ma\~n\'e's theorem and Siegel compacta.}
Consider the set $\UH(f)$ in (\ref{eq:unhyp}).
By a standard argument (cf. \cite[\S 2]{DOproximity}), we have
$\UH(f)\cap F(f)=\AT(f)$ and $\UH(f)\cap J(f)\supset \PB(f)\cup M(f)$,
and Ma\~n\'e's theorem below sharpens the second containment to equality, so that
\begin{gather*}
 \UH(f)=\AT(f)\cup\PB(f)\cup M(f).
\end{gather*}

\begin{theorem}[{\cite[Theorem II]{Mane93}}]\label{th:recurrent}
 For every $a\in J(f)\setminus(\PB(f)\cup M(f))$
 and every $\epsilon>0$, there exists an open neighborhood $U$ of $a$ such that
 \begin{gather}
  \begin{aligned}
   &\sup_{k\in\bN}\max_{V^{-k}}\diam V^{-k}\le\epsilon,
   &\sup_{k\in\bN}\max_{V^{-k}}\deg(f^k:V^{-k}\to U)\le d^{2d-2},
  \end{aligned}\notag\\
  \begin{gathered}
   \lim_{k\to\infty}\max_{V^{-k}}\diam V^{-k}=0,\label{eq:schrink}
  \end{gathered}
 \end{gather}
where $V^{-k}$ ranges over all components of $f^{-k}(U)$.
\end{theorem}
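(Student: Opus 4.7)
The plan is to argue by contradiction, combining a critical-orbit counting argument for the degree bound with a normal family / distortion argument for the shrinking-diameter statement. The key consequence of the hypothesis $a\in J(f)\setminus(\PB(f)\cup M(f))$ is that $a$ is neither a parabolic periodic point nor an accumulation point of a recurrent critical orbit, and in particular is not a Cremer point since $\CM(f)\subset M(f)$.

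First I would choose a base neighborhood $U_0=U_{\epsilon_1}(a)$ disjoint from the finite set $\PB(f)$ and from $\omega_f(c)$ for every recurrent critical point $c\in C(f)\cap J(f)$; the latter is possible because $M(f)$ is closed and $a\notin M(f)$. Shrinking $U_0$ further, the forward orbit of every such recurrent $c$ avoids $U_0$ past some finite iterate $N_c$, and the same then holds automatically for the finitely many non-recurrent critical points in $C(f)\cap J(f)$ whose $\omega$-limit set does not contain $a$. Fatou-set critical points lie in basins that can be separated from $U_0$ by choosing $\epsilon_1$ small.

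For the degree bound, I would apply Riemann-Hurwitz to express $\deg(f^k\colon V^{-k}\to U)$ in terms of the critical points of $f^k$ inside $V^{-k}$; each such critical point is a pre-image of some $c\in C(f)$, and its local degree factors multiplicatively along the chain $V^{-k}\to V^{-k+1}\to\cdots\to V^{-1}\to U$. The point is that each visit of a forward orbit of a critical point $c$ of $f$ to $U$ contributes a local factor of at most $d$, and by the preceding paragraph the total number of such visits along any chain terminating in $U\subset U_0$ is bounded by $2d-2$, the total critical multiplicity of $f$. This yields the uniform estimate $\deg(f^k\colon V^{-k}\to U)\le d^{2d-2}$.

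Finally, I would prove (\ref{eq:schrink}) by contradiction. Suppose there exist $k_j\to\infty$ and components $V^{-k_j}$ with spherical diameters $\ge\delta>0$. Using the uniform degree bound, one can lift the branches $f^{k_j}\colon V^{-k_j}\to U$ to the universal cover of $U$ minus the finitely many critical values encountered, where genuine Koebe-type distortion applies, and extract subsequential limits of the normalized branches. The limit configuration produces either a univalent map of a disk into itself with an indifferent fixed point in $U$, forcing $a\in\PB(f)\cup\CM(f)\subset\PB(f)\cup M(f)$, or else an accumulation of forward orbits of a critical point at $a$ with the recurrence property $c\in\omega_f(c)$, forcing $a\in M(f)$; either conclusion contradicts the hypothesis. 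The main obstacle is precisely this last step: controlling distortion on multiply-connected inverse images and translating the compactness obtained into a genuine recurrence statement for some critical orbit is the technical heart of Ma\~n\'e's original argument, and is where the bound $d^{2d-2}$ feeds back in as the uniform control on the conformal geometry.
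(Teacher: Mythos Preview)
The paper does not prove this statement: Theorem~\ref{th:recurrent} is quoted verbatim from Ma\~n\'e \cite[Theorem II]{Mane93} as background, with no argument supplied. So there is no in-paper proof to compare your proposal against; the authors simply invoke the result later (in \S\ref{sec:Schroder}) via the conclusion~(\ref{eq:schrink}).

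As for your outline itself, it follows the broad shape of Ma\~n\'e's original argument, but the degree-bound step is not quite right as written. You claim that ``the total number of such visits along any chain terminating in $U\subset U_0$ is bounded by $2d-2$,'' but what you have actually arranged is only that \emph{recurrent} critical orbits in $J(f)$ miss $U_0$. A non-recurrent critical point $c\in C(f)\cap J(f)$ may still satisfy $a\in\omega_f(c)$ without contributing to $M(f)$ (since $c\notin\omega_f(c)$), and then the forward orbit of $c$ can enter any neighborhood of $a$ infinitely often. Ma\~n\'e handles this by an inductive shrinking of the pullback chain combined with a diameter estimate, not by a one-shot choice of $U_0$; the degree bound and the diameter bound are proved together, each feeding into the other. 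Your final paragraph correctly flags that this coupling is the technical heart, but the separation you propose---degree bound first, then shrinking---does not survive the case of a non-recurrent critical point accumulating at $a$.
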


We also use P\'erez-Marco's theorem on indifferent fixed points. 

\begin{theorem}[{\cite[Theorem 1]{PM97}}]\label{th:hedgehog}
 Let $\phi$ be an analytic germ at an indifferent fixed point $x\in\hbC$,
 which is univalent on an open set compactly containing
 a Jordan neighborhood $U\subset\hbC$ of $x$.
 Then there exists a continuum $K\subset\overline{U}$,
 which is called a Siegel compactum associated to $(\phi,U,x)$,
 such that $x\in K=\phi(K)\not\subset U$ and $\hbC\setminus K$ is connected.
\end{theorem}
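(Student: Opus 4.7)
The plan is to follow P\'erez-Marco's hedgehog construction in \cite{PM97}. The strategy splits naturally in two: build a compact, $\phi$-invariant continuum $K \subset \overline{U}$ with connected complement by intersecting all forward and backward preimages of $\overline{U}$, and then use the indifference of $\phi'(x)$ to rule out $K \subset U$.

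\textbf{Construction of $K$.} Since $\phi$ is univalent on an open set $\Omega$ with $\overline{U} \Subset \Omega$, the inverse branch $\phi^{-1}$ is defined on the neighborhood $\phi(\Omega)$ of $\phi(\overline{U})$, and we can iterate $\phi^{\pm 1}$ along any orbit that stays in $\Omega$. Set
\[
K_0 := \{z \in \overline{U} : \phi^n(z)\text{ is defined and lies in }\overline{U}\text{ for every }n \in \bZ\},
\]
interpreting $\phi^n$ for $n<0$ as iterates of the selected inverse branch along the orbit. Then $K_0$ is compact, fully $\phi$-invariant, and contains $x$. Let $K_1$ be the connected component of $K_0$ through $x$; it is a continuum, and $\phi(K_1) = K_1$ because $\phi$ permutes the components of $K_0$ and fixes $x$. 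Fix $\infty \in \hbC \setminus \overline{U}$ and let $K$ be the topological hull of $K_1$, i.e., the complement in $\hbC$ of the component of $\hbC \setminus K_1$ containing $\infty$. Because $U$ is a Jordan neighborhood, $\overline{U}$ is simply connected and every bounded complementary component of $K_1 \subset \overline{U}$ lies in $\overline{U}$, so $K \subset \overline{U}$. By construction $K$ is a continuum with connected complement, and $\phi(K) = K$ since $\phi$ permutes the bounded components of $\hbC \setminus K_1$ (all of which lie in $\Omega$).

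\textbf{$K$ must reach $\partial U$.} The essential content of the theorem is $K \not\subset U$. Suppose for contradiction $K \Subset U$; then $\hbC \setminus K$ is a simply connected open set containing $\partial U$ in its interior, and a Riemann map $\psi : \hbC \setminus K \to \{|w|>1\}$ with $\psi(\infty)=\infty$ conjugates $\phi$ (which is univalent on a neighborhood of $K$ because $K \subset U \Subset \Omega$) to a map $\tilde\phi$ univalent on an annular neighborhood of $\partial\bD$ inside $\{|w|>1\}$. Schwarz reflection extends $\tilde\phi$ across $\partial\bD$ to a univalent map of a two-sided annular neighborhood of $\partial\bD$, and by Naishul's theorem on the conformal invariance of the rotation number at an indifferent fixed point, the circle homeomorphism $\tilde\phi|\partial\bD$ has the same rotation number as $\phi$ at $x$. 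P\'erez-Marco then couples this rotation-number datum with potential-theoretic estimates on the Green function of $\hbC \setminus K$ and with the dynamics of $\tilde\phi$ near $\partial\bD$ to produce points whose full $\phi$-orbits remain in $\overline{U}$ yet escape $K$, contradicting the definition of $K_0$.

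\textbf{Principal obstacle.} The construction of $K$ in the first step is routine planar topology and elementary dynamics. The entire difficulty lies in the second step: translating the indifference of $\phi'(x)$ into the geometric statement that $K$ cannot be compactly contained in $U$. This is exactly the content of \cite{PM97}, combining Naishul's theorem, Schwarz reflection, and delicate Green-function estimates near Siegel compacta, and it is the piece of the proof I would cite rather than reprove.
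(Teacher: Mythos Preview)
The paper does not prove this theorem at all: it is merely stated as a citation of P\'erez-Marco's result \cite[Theorem 1]{PM97} and used as a black box in \S\ref{sec:Schroder}. So there is no ``paper's own proof'' to compare against; your sketch is essentially an outline of the argument in the cited reference, and you yourself acknowledge at the end that the crucial step is something you would cite rather than reprove---which is exactly what the present paper does.
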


\bold{Meromorphic maps of finite order.}
Let $g$ be a meromorphic map on $\bC$. The {\itshape order} of $g$ is
$\rho=\rho_g:=\limsup_{r\to\infty}(\log T(r,g))/(\log r)\in[0,+\infty]$
(cf. \cite[p.\ 215]{Nevan70}). 
When $\rho_g<\infty$, as occurs here (compare (\ref{eq:Valiron}) below),
$g$ is subject to two fundamental controls.

\begin{theorem}[Denjoy-Carleman-Ahlfors, cf. {\cite[p.\ 303, p. 307]{Nevan70}}]\label{th:DCA}
\begin{gather}
 \#\{\mathcal{A}\in\TS(g);\mathcal{A}\text{ is direct}\}\le 2\rho\vee 1.\label{eq:DCA}
\end{gather}
Moreover, if $g$ is entire, then
\begin{gather}
 \#\{\mathcal{A}\in\TS(g);g(\mathcal{A})\in\bC\}\le 2\rho.\label{eq:DCAentire}
\end{gather}
\end{theorem}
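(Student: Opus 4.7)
The plan is to apply Ahlfors' length-area (distortion) method. Let $\{\mathcal{A}_i\}_{i=1}^N$ be a collection of pairwise distinct direct transcendental singularities of $g$, with values $a_i := g(\mathcal{A}_i) \in \hbC$. By the very definition of \emph{direct}, for each $i$ one may pick $r_i > 0$ and the corresponding component $A_{r_i}^{(i)}\subset\bC$ of $g^{-1}(U_{r_i}(a_i))$ belonging to $\mathcal{A}_i$ such that $a_i \notin g(A_{r_i}^{(i)})$; shrinking the $r_i$ further, we may assume that the $A_{r_i}^{(i)}$ are pairwise disjoint unbounded open subsets of $\bC$.

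On each tract I would construct a positive subharmonic function on $\bC$: when $a_i \in \bC$, set $u_i := \log(r_i/|g-a_i|)$ on $A_{r_i}^{(i)}$, and when $a_i = \infty$, set $u_i := \log(|g|/R_i)$ on $A_{r_i}^{(i)}$ for an appropriate $R_i$; extend by $0$ outside. Each $u_i$ is positive in $A_{r_i}^{(i)}$, vanishes on $\partial A_{r_i}^{(i)}$, and tends to $+\infty$ along $\mathcal{A}_i$. Let $\theta_i(R)$ denote the angular measure of $A_{r_i}^{(i)} \cap \{|z|=R\}$, with $\theta_i(R) := +\infty$ if that circle lies entirely in the tract. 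Ahlfors' distortion theorem then gives, for all large $R$,
\begin{gather*}
 B_i(R) := \max_{|z|=R,\,z\in A_{r_i}^{(i)}} u_i(z) \ge c_i \exp\left(\pi \int_{R_0}^R \frac{dt}{t\,\theta_i(t)}\right).
\end{gather*}
Disjointness forces $\sum_{i=1}^N \theta_i(R) \le 2\pi$, and the Cauchy-Schwarz inequality applied to $\theta_i(t)^{1/2}$ and $\theta_i(t)^{-1/2}$ yields
\begin{gather*}
 \sum_{i=1}^N \int_{R_0}^R \frac{dt}{t\,\theta_i(t)} \ge \frac{N^2}{2\pi}\log\frac{R}{R_0},
\end{gather*}
so some index $i$ satisfies $B_i(R) \ge c\, R^{N/2}$ for all large $R$.

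Finally, I would bound $B_i(R)$ against the Nevanlinna characteristic. The first main theorem and a standard Poisson-kernel comparison on the circle $\{|z|=R\}$ give $B_i(R) \le C\,T(\alpha R, g) + O(1)$ for some fixed $\alpha > 1$, hence $T(R,g) \gtrsim R^{N/2}$ and $\rho_g \ge N/2$. This yields $N \le 2\rho$ whenever $N \ge 2$, which combined with the trivial $N \ge 1$ gives (\ref{eq:DCA}). For the entire case with all $a_i \in \bC$, one replaces the characteristic by $\log M(R,g)$ and treats the single-singularity subcase $N = 1$ separately by a Phragm\'en-Lindel\"of argument in the one tract, producing the sharper (\ref{eq:DCAentire}).

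The main obstacle, as is usual with the length-area method, is the careful execution of the Ahlfors distortion inequality: one must regularize $\theta_i(t)$, handle the tracts where the circle is almost entirely contained in one $A_{r_i}^{(i)}$, and ensure that the multiplicative constants $c_i$ remain usable uniformly in $i$. A secondary subtlety is the removal of the ``$\vee 1$'' loss in the entire case when $N=1$, which is not delivered by the length-area argument itself and requires a separate Phragm\'en-Lindel\"of input adapted to a single tract omitting a finite value.
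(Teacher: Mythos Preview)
The paper does not supply its own proof of this statement: Theorem~\ref{th:DCA} is quoted as the classical Denjoy--Carleman--Ahlfors theorem with a reference to Nevanlinna's book, and is used throughout as a black box. Your outline is the standard length--area argument behind that classical result (disjoint tracts, subharmonic functions $u_i$ built from $\log|g-a_i|^{\pm1}$, the Cauchy--Schwarz step on the angular measures $\theta_i$, then comparison with the characteristic or $\log M(R,g)$), so there is nothing to compare against here beyond noting that your sketch matches the cited source in spirit.

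One minor remark: the paper does carry out a closely related harmonic-measure computation in \S\ref{sec:kysymus} (Lemmas~\ref{th:Lindelof} and~\ref{th:setF}) when proving Theorem~\ref{th:Kysymus}, and you can see there the same Cauchy--Schwarz and two-constants ingredients you invoke; but that is for a different statement, not a proof of Theorem~\ref{th:DCA} itself.
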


\begin{theorem}[Bergweiler-Eremenko {\cite[Theorem 1']{BE95}}]\label{th:indirect}
 If $\mathcal{A}=\{A_r\}\in\TS(g)$ is indirect, then
 there exists $(c_k)\subset C(g)\setminus g^{-1}(g(\mathcal{A}))$
 such that $c_k\to\mathcal{A}$ as $k\to\infty$.
 In particular, $g(\mathcal{A})$ is a derived point of $g(C(g))$. 
\end{theorem}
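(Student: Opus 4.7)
The plan is to argue by contradiction. Assume no such sequence $(c_k)\subset C(g)\setminus g^{-1}(g(\mathcal{A}))$ tends to $\mathcal{A}$; then there is an $r_0>0$ so that every $c\in C(g)\cap A_{r_0}$ satisfies $g(c)=a$, where $a:=g(\mathcal{A})$. After a M\"obius change of coordinates in the range I may take $a=0$. By shrinking $r_0$ further if necessary, I can ensure that $A_{r_0}$ has $\mathcal{A}$ as its only topological end: any other singularity of $g$ carried by a component of $g^{-1}(U_r(0))$ gets cut off into a separate component once $r$ is small enough.

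Put $E:=A_{r_0}\setminus g^{-1}(0)$. The restriction $g|_E\colon E\to \Delta^*_{r_0}:=U_{r_0}(0)\setminus\{0\}$ has no critical points by hypothesis, so is a local biholomorphism. The key technical step, which I expect to be the main obstacle, is to promote this to a covering map. Given a path $\gamma\colon[0,1]\to \Delta^*_{r_0}$ with a prescribed starting lift in $E$, I lift locally and consider the supremum $t^*$ of times up to which a lift exists. Any subsequential limit of $\tilde\gamma(t_n)$ as $t_n\uparrow t^*$ cannot be a finite point of $\partial A_{r_0}$ (there $|g|=r_0$, outside $\Delta^*_{r_0}$), cannot be a point of $g^{-1}(0)$ (the image stays in the compact subset $\gamma([0,1])$ of $\Delta^*_{r_0}$, hence $g$ is bounded away from $0$), and cannot escape to infinity along the unique end of $A_{r_0}$ (which would force $g\to 0$). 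So the lift extends through $t^*$, and $g|_E$ has the path-lifting property, hence is a covering.

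Because $\pi_1(\Delta^*_{r_0})\cong \bZ$, the connected cover $E$ is either $n$-sheeted for some $n\in\bN$ (so $E$ is conformally a punctured disk) or the universal cover (so $E$ is conformally a half-plane and simply connected). In the $n$-sheeted case, there is a unique $b\in g^{-1}(0)\cap A_{r_0}$; adding $b$ back makes $A_{r_0}$ conformally the disk $\{|w|<1\}$, with $g$ realized after normalization as $w\mapsto r_0 w^n$. For each $0<s<r_0$, the subtract $A_s$ corresponds under this biholomorphism to $\{|w|<(s/r_0)^{1/n}\}\ni 0$, so $b\in\bigcap_{s>0}A_s$, contradicting $\bigcap_{s>0}A_s=\emptyset$.

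In the universal-cover case, the indirectness of $\mathcal{A}$ supplies some $b\in g^{-1}(0)\cap A_{r_0}$, near which $g(z)=c(z-b)^k+O((z-b)^{k+1})$ with $k\ge 1$. A small loop in $E$ around $b$ projects under $g$ to a loop of winding number $k\ne 0$ in $\Delta^*_{r_0}$, nontrivial in $\pi_1(\Delta^*_{r_0})$. But that loop is null-homotopic in the simply connected $E$, and a covering sends null-homotopic loops to null-homotopic loops---contradiction. With both cases excluded, the sought sequence $(c_k)$ must exist; the derived-point conclusion then follows at once from $g(c_k)\to a$.
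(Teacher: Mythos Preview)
The paper does not give its own proof of this statement; it is quoted verbatim from Bergweiler--Eremenko \cite{BE95} as one of the two ``fundamental controls'' available under the standing hypothesis $\rho_g<\infty$. So there is nothing in the paper to compare your argument against, only the original source.

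Your argument, however, has a genuine gap, and it is a telling one: you never use the hypothesis that $g$ has finite order. Since the theorem is \emph{false} for meromorphic functions of infinite order (Bergweiler and Eremenko give explicit examples), any argument that does not invoke $\rho_g<\infty$ must break down somewhere. The break is in your path-lifting step. You assert that by shrinking $r_0$ you can arrange that ``$A_{r_0}$ has $\mathcal{A}$ as its only topological end,'' and later that a lift escaping to $\infty$ ``would force $g\to 0$.'' But shrinking $r_0$ only separates off other transcendental singularities of $g$ lying over the single point $a=0$; it does nothing to exclude a singularity lying over some $b\in U_{r_0}(0)\setminus\{0\}$. Concretely, there may well be sequences $z_n\to\infty$ inside $A_{r_0}$ with $g(z_n)\to b\neq 0$, and then your lift $\tilde\gamma$ can escape to infinity while $g(\tilde\gamma(t_n))$ stays in the compact set $\gamma([0,1])\subset\Delta^*_{r_0}$. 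In that case $g|_E$ is \emph{not} a covering of $\Delta^*_{r_0}$ and the dichotomy ``finite-sheeted vs.\ universal cover'' never gets off the ground.

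This is exactly the place where the finite-order hypothesis does real work in \cite{BE95}: it is used, via growth estimates and a rescaling (Zalcman-type) argument, to control what can happen along such escaping sequences. Your covering-space outline is close in spirit to how one would \emph{like} the proof to go, and the two endgame cases are handled reasonably, but the reduction to a genuine covering map over the punctured disk is precisely the hard part, and it cannot be obtained by a purely topological shrinking of $r_0$.
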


We record one consequence of Theorem \ref{th:DCA}.

\begin{lemma}\label{th:directunique}
 Suppose that $\rho_g<\infty$.
 If $\mathcal{A}=\{A_r\}\in\TS(g)$ is direct and $r>0$ is small enough, 
 then for every $t\in(0,r)$,
 $A_t$ is the only component of $g^{-1}(U_t(g(\mathcal{A})))$ contained in $A_r$.
\end{lemma}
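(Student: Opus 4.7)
The plan is to argue by contradiction, combining the directness of $\mathcal{A}$ with the finiteness of direct singularities supplied by Theorem \ref{th:DCA}. Write $a = g(\mathcal{A})$. By Theorem \ref{th:DCA}, the direct singularities of $g$ lying over $a$ form a finite set $\mathcal{A} = \mathcal{A}^{(1)}, \ldots, \mathcal{A}^{(m)}$. Two inequivalent direct tract families must be eventually disjoint, because if $A^{(i)}_r \cap A^{(j)}_r \neq \emptyset$ then $A^{(i)}_r = A^{(j)}_r$ as components of the same open set, and persistence of this as $r \downarrow 0$ forces the two singularities to be equivalent. I therefore fix $r_0 > 0$ small enough that $A^{(1)}_{r_0}, \ldots, A^{(m)}_{r_0}$ are pairwise disjoint and each avoids $g^{-1}(a)$, the latter by directness of each $\mathcal{A}^{(i)}$.

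Fix $r \leq r_0$ and $t \in (0, r)$, and let $A'$ be a component of $g^{-1}(U_t(a))$ with $A' \subset A_r = A^{(1)}_r$; I show $A' = A_t$. Note $A' \cap g^{-1}(a) = \emptyset$ since $A' \subset A^{(1)}_{r_0}$. If $A'$ is bounded, then $g$ is holomorphic near $\overline{A'}$ (no pole of $g$ can lie on $\partial A'$, since a pole has a neighborhood disjoint from $g^{-1}(U_t(a))$) and $g|_{\overline{A'}} \colon \overline{A'} \to \overline{U_t(a)}$ is proper with $g(\partial A') \subset \partial U_t(a)$, so $g(A') = U_t(a)$, contradicting the absence of preimages of $a$ in $A'$. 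Hence $A'$ is unbounded. Now I invoke the classical tract theory underlying Theorem \ref{th:DCA}: applied to the positive harmonic function $\log(t/|g - a|)$ on $A'$, which vanishes on $\partial A'$, the Ahlfors--Carleman estimate shows that, for $t$ small, every unbounded component of $g^{-1}(U_t(a))$ disjoint from $g^{-1}(a)$ must coincide with the $t$-tract $A^{(i)}_t$ of one of the direct singularities $\mathcal{A}^{(i)}$ over $a$. Hence $A' = A^{(i)}_t$ for some $i$, and $A^{(i)}_t \subset A^{(1)}_r$ together with $A^{(i)}_t \subset A^{(i)}_r$ gives $A^{(1)}_r \cap A^{(i)}_r \neq \emptyset$; pairwise disjointness forces $i = 1$, so $A' = A_t$.

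The principal obstacle is the identification step in the unbounded case: showing that every unbounded component of $g^{-1}(U_t(a))$ avoiding $g^{-1}(a)$, for $t$ small, arises from one of the $m$ direct singularities over $a$. This is a core assertion of Denjoy--Carleman--Ahlfors theory and essentially uses the finite-order hypothesis on $g$; the other ingredients, namely properness in the bounded case and separation of inequivalent tract families, are elementary.
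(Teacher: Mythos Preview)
Your argument and the paper's are essentially the same: both hinge on the observation that an unbounded component $A'\subset A_r$ of $g^{-1}(U_t(a))$ with $a\notin g(A')$ determines a direct transcendental singularity over $a$ whose $t$-tract is $A'$, after which the finiteness statement (\ref{eq:DCA}) closes the argument. The paper phrases this as ``assume failure for every $r$ and manufacture infinitely many distinct direct singularities $\mathcal{B}^j$'', while you enumerate the finitely many direct singularities over $a$ first and force $A'$ to match one of their tracts; these are contrapositives of one another.

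The one point that needs correcting is your justification of the identification step. The Ahlfors--Carleman estimate is the harmonic-measure inequality used to \emph{bound the number} of direct singularities; it does not show that a given unbounded $a$-omitting component $A'$ is the $t$-tract of \emph{some} direct singularity over $a$, and---contrary to your last paragraph---this step does not use finite order at all. What you actually need is that $g^{-1}(U_s(a))\cap A'\neq\emptyset$ for every $s\in(0,t)$. If this failed for some $s$, then $v:=\log(t/|g-a|)$ (suitably interpreted when $a=\infty$) would be a bounded positive harmonic function on $A'$ vanishing on $\partial A'$; extending $v$ by $0$ yields a bounded nonnegative subharmonic function on all of $\bC$, hence constant by Liouville for subharmonic functions, hence $\equiv 0$, contradicting $v>0$ on $A'$. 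Once this is known, nesting down produces a direct $\mathcal{B}\in\TS(g)$ with $B_t=A'$ (the intersection of the nested tracts is empty since any point in it would map to $a$), and your enumeration forces $\mathcal{B}=\mathcal{A}^{(i)}$ for some $i$, as you wanted. The paper uses this same step implicitly when it asserts the existence of the $\mathcal{B}^j$. So finite order enters only through (\ref{eq:DCA}), not through the identification step you flagged as the ``principal obstacle''.
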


\begin{proof}
 Otherwise, there exists $(r_j)_{j\in\bN}\subset\bR_{>0}$ 
 decreasing to $0$ such that for each $j\in\bN$,
 there is a component $B_{r_{j+1}}$ of $g^{-1}(U_{r_{j+1}}(g(\mathcal{A})))$
 other than $A_{r_{j+1}}$ and contained in $A_{r_j}$.
 Since $\mathcal{A}$ is direct, 
 we may assume that $h(A_{r_1})\not\ni g(\mathcal{A})$.
 Then for every $j\ge 2$, 
 there exists $\mathcal{B}^j=\{B^j_r\}\in\TS(g)$ such that
 $g(\mathcal{B}^j)=g(\mathcal{A})$ and 
 $B^j_{r_j}=B_{r_j}\subset A_{r_{j-1}}\setminus A_{r_j}$,
 so that all $\mathcal{B}^j$ are not only direct but also mutually distinct.
 This contradicts (\ref{eq:DCA}).
\end{proof}

\bold{Schr\"oder maps.}
When $g$ is a Schr\"oder map $h$ of $f$ as in \S $\ref{sec:intro}$, 
Valiron calculated that
\begin{gather}
 \rho_h=\frac{\log d^{p}}{\log|\lambda|}(<\infty)\label{eq:Valiron} 
\end{gather}
(cf. \cite[p.\ 160]{Valiron54}), 
so those results may be applied to $h$.

The next theorem seems well known. We sketch a proof for completeness.

\begin{theorem}\label{th:critvalue} 
For every Schr\"oder map $h$ of $f$ with $p=1$,
\begin{gather}
 E(f):=\{a\in\hbC;f^{-2}(a)=\{a\}\}=\{a\in\hbC;h^{-1}(a)=\emptyset\},\label{eq:exceptional}\\
 h(C(h))=\bigcup_{k\in\bN}f^k(C(f))\setminus E(f).\label{eq:critvalue}
\end{gather}
\end{theorem}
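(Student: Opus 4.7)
The plan is to exploit the Schr\"oder functional equation $f^k\circ h=h\circ\lambda^k$ together with its differentiated form
\begin{equation*}
  \lambda^k h'(\lambda^k w)=(f^k)'(h(w))\,h'(w),
\end{equation*}
which couples the critical structure of $h$ to that of the iterates of $f$ and to the dynamics of multiplication by $\lambda$ near $0$.

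\textbf{Part 1 (exceptional set).} For the inclusion $E(f)\subset\{a;h^{-1}(a)=\emptyset\}$, I would assume $a\in E(f)$ with $h(w)=a$; applying the Schr\"oder equation twice and iterating gives $h(w/\lambda^{2n})\in f^{-2n}(a)=\{a\}$ for every $n\ge 0$, so passage to the limit $n\to\infty$ (using $|\lambda|>1$ and continuity of $h$) forces $h(0)=a$. Since $h(0)=z_0\in J(f)$ while points of $E(f)$ are totally ramified periodic points of $f^2$, hence super-attracting and lying in $F(f)$, this is a contradiction. For the reverse inclusion, I observe that $h(\bC)$ is forward invariant under $f$ (from $f(h(\bC))=h(\lambda\bC)=h(\bC)$), so $N:=\hbC\setminus h(\bC)$ is backward invariant; Picard's theorem for the transcendental meromorphic function $h$ bounds $|N|\le 2$, and a brief case analysis of backward-invariant configurations of $1$ or $2$ points shows each $a\in N$ satisfies $f^{-2}(a)=\{a\}$.

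\textbf{Part 2 (critical values).} For the inclusion $(\subset)$ in \eqref{eq:critvalue}, I would take $w\in C(h)$ and apply the differentiated equation at $w/\lambda$ to produce the dichotomy ``$h(w/\lambda)\in C(f)$ or $w/\lambda\in C(h)$''. Iterating must terminate at some minimal $k\ge 1$ with $h(w/\lambda^k)\in C(f)$, giving $h(w)=f^k(h(w/\lambda^k))\in f^k(C(f))$; termination is forced because otherwise the infinite sequence $(w/\lambda^n)_n$ would sit in $C(h)$ and accumulate at $0$, violating the discreteness of the zero set of the non-constant meromorphic function $h'$, while $h'(0)=1\neq 0$ already excludes $0\in C(h)$. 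The omission $h(w)\notin E(f)$ follows from Part~1. For $(\supset)$, given $b=f^k(c)\notin E(f)$ with $c\in C(f)$: forward invariance of $E(f)$ gives $c\notin E(f)$, so by Part~1 I may select $u\in h^{-1}(c)$; setting $w:=\lambda^k u$, the Schr\"oder equation yields $h(w)=b$, while the differentiated equation reduces $\lambda^k h'(w)$ to $(f^k)'(c)\,h'(u)=0$ since $f'(c)=0$.

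\textbf{Main obstacle.} The one truly nonmechanical step is the backward inclusion of \eqref{eq:exceptional}, which upgrades the assertion ``$a$ is omitted by $h$'' to the sharp dynamical condition $f^{-2}(a)=\{a\}$: it requires Picard to bound $|N|$ together with a short combinatorial check of which one- or two-point backward-invariant sets are actually realizable for a rational map of degree $\ge 2$. Once \eqref{eq:exceptional} is secured, both inclusions in \eqref{eq:critvalue} follow by direct manipulation of the (differentiated) Schr\"oder equation, using only $h'(0)=1$ and the discreteness of $C(h)$ to terminate the iteration.
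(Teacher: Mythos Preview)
Your proposal is correct and follows essentially the same approach as the paper: both directions of \eqref{eq:exceptional} are handled via the scaling $w\mapsto w/\lambda$ together with Picard's theorem, and \eqref{eq:critvalue} is obtained exactly as the paper indicates, by iterating the differentiated Schr\"oder equation and using that $h'$ is nonzero near the origin. The only cosmetic difference is that for the inclusion $\{a:h^{-1}(a)=\emptyset\}\subset E(f)$ the paper invokes the standard characterization $E(f)=\{a:\#\bigcup_{j}f^{-j}(a)<\infty\}$ directly (so that $a\notin E(f)$ gives at least three backward-orbit points for Picard to hit), whereas you reach the same conclusion by the equivalent case analysis of backward-invariant sets of cardinality at most two.
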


\begin{proof}
 We recall (cf. \cite[Lemma 4.9]{Milnor3rd} or \cite[Theorem 2.3.3]{MorosawaTaniguchi}) that $E(f)\subset C(f)$ and that
\begin{gather}
  E(f)=\{a\in\hbC;\#\bigcup_{j\in\bN}f^{-j}(a)<\infty\}.\label{eq:backwardfinite}
 \end{gather}
 Let $E_P(h)^*$ be the last term in (\ref{eq:exceptional}).
 Suppose that $a\in\hbC\setminus E(f)$. 
 Then by (\ref{eq:backwardfinite}), $\#\bigcup_{j\in\bN}f^{-j}(a)=\infty(\ge 3)$, 
 so that by Picard's theorem,
 $h(\bC)\cap\bigcup_{j\in\bN}f^{-j}(a)\neq\emptyset$.
 This with (\ref{eq:Schroder}) implies that $h^{-1}(a)\neq\emptyset$,
 i.e, $a\in\hbC\setminus E_P(h)^*$. Conversely,
 suppose that $a\in\hbC\setminus E_P(h)^*$.
 Note that for every $j\in\bN$, by repeated use of (\ref{eq:Schroder}),
 \begin{gather}
  h^{-1}(f^{-j}(a))=\lambda^{-j}(h^{-1}(a)).\label{eq:pullbackpreimage}
 \end{gather} 
 If $a\in E(f)$, then by the first equality in (\ref{eq:exceptional}),
 we would have $h^{-1}(a)=h^{-1}(f^{-2j}(a))=\lambda^{-2j}(h^{-1}(a))$
 for every $j\in\bN$. Hence since $|\lambda|>1$ and $h$ is continuous at $0$,
 we would have $0\in h^{-1}(a)$, and $z_0=h(0)=a\in(E(f)\subset)C(f)$.
 This contradicts that $f'(z_0)=\lambda\neq 0$.

 We have shown (\ref{eq:exceptional}).
 We can show (\ref{eq:critvalue}) by repeated use of (\ref{eq:Schroder}),
 the chain rule and the fact that $h'(w)\neq 0$ if $|w|$ is small enough.
\end{proof}

\begin{remark}
We note another description of $E(f)$: 
 \begin{gather*}
  E(f)=E_P(h):=\{a\in\hbC;\# h^{-1}(a)<\infty\}.
 \end{gather*} 
 By (\ref{eq:exceptional}), $E(f)=E_P(h)^*\subset E_P(h)$.
 Conversely, suppose that $a\in E_P(h)$. Then by (\ref{eq:pullbackpreimage}), 
 we have for every $j\in\bN$, $\#h^{-1}(f^{-j}(a))=\# h^{-1}(a)<\infty$.
 Hence $\bigcup_{j\in\bN}f^{-j}(a)\subset E_P(h)$, and
 $\# E_P(h)\le 2$ by Picard's theorem.
 Thus by (\ref{eq:backwardfinite}), we have $a\in E(f)$.

 Since $E(f^k)=E(f)$ for every $k\in\bN$,
 we have $E(f)=E_P(h)^*=E_P(h)$ even when $p\ge 2$.
\end{remark}

\begin{corollary}\label{th:postcrit}
Let $h$ be a Schr\"oder map of $f$. Then
\begin{enumerate}
 \item if $f$ is a polynomial, then $h$ is entire$;$
       \label{item:entire}
 \item every direct $\mathcal{A}\in\TS(h)$ is periodic.
       If $h$ is also entire, then every $\mathcal{A}\in\TS(h)$ is periodic$;$  
       \label{item:periodic}
 \item for every indirect $\mathcal{A}\in\TS(h)$,
       $h(\mathcal{A})$ is a derived point of the critical orbit
       $\bigcup_{k\in\bN}f^k(C(f))$ of $f$.
       \label{item:postcrit}
\end{enumerate}
\end{corollary}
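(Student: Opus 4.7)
My plan is to derive each of the three statements as essentially a direct consequence of a single theorem stated above, together with the remark that Theorem \ref{th:critvalue}, though stated for $p=1$, applies verbatim to $f^p$ (of which $h$ is a Schr\"oder map at the fixed point $z_0$). From $E(f^p)=E(f)$ and the chain-rule identity $C(f^p)=\bigcup_{j=0}^{p-1}f^{-j}(C(f))$ one reads off $\bigcup_{k\in\bN}(f^p)^k(C(f^p))=\bigcup_{k\in\bN}f^k(C(f))$, so (\ref{eq:critvalue}) applied to $f^p$ yields $h(C(h))=\bigcup_{k\in\bN}f^k(C(f))\setminus E(f)$.

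For (i), since $f$ (and hence $f^p$) is a polynomial, $\infty\in E(f^p)=E(f)$; by (\ref{eq:exceptional}) applied to $f^p$, $h^{-1}(\infty)=\emptyset$, which is to say $h$ is entire. For (iii), Theorem \ref{th:indirect} applied to $g=h$ says that for every indirect $\mathcal{A}\in\TS(h)$, $h(\mathcal{A})$ is a derived point of $h(C(h))$, and $h(C(h))\subset\bigcup_{k\in\bN}f^k(C(f))$ by the preceding paragraph; since a derived point of a subset is a derived point of the ambient set, $h(\mathcal{A})$ is a derived point of the critical orbit.

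For (ii), the main tool is Theorem \ref{th:extension}: the map $\Lambda_h:\TS(h)\to\TS(h)$ is a bijection that preserves direct/indirect character, so it restricts to a bijection on the set of direct singularities. By the Denjoy-Carleman-Ahlfors bound (\ref{eq:DCA}) together with Valiron's finite-order estimate (\ref{eq:Valiron}), that restricted set is finite, and a bijection of a finite set decomposes into cycles, so every direct $\mathcal{A}$ is $\Lambda_h$-periodic. For the second half, assume $h$ is entire. Then any $\mathcal{A}=\{A_r\}\in\TS(h)$ with $h(\mathcal{A})=\infty$ is automatically direct, because $h(A_r)\subset\bC$ cannot contain $\infty$; combining this observation with (\ref{eq:DCAentire}), which bounds the number of singularities over $\bC$, shows $\TS(h)$ itself is finite, so the same bijection-on-a-finite-set argument again gives periodicity for every $\mathcal{A}\in\TS(h)$.

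The only genuinely conceptual step is in the second half of (ii): one must notice that an entire Schr\"oder map's singularities over $\infty$ lie beyond the reach of (\ref{eq:DCAentire}) but are automatically direct, hence bounded by (\ref{eq:DCA}), so that the two finiteness statements together pin down the whole of $\TS(h)$. Everything else is routine, the only modest bookkeeping being the transfer of Theorem \ref{th:critvalue} from $p=1$ to general $p$ via replacing $f$ by $f^p$.
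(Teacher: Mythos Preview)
Your proof is correct and follows essentially the same approach as the paper's own proof, which simply cites (\ref{eq:exceptional}) for (i), Theorems \ref{th:extension} and \ref{th:DCA} for (ii), and (\ref{eq:critvalue}) with Theorem \ref{th:indirect} for (iii). You have unpacked these citations carefully---in particular the finite-set/bijection argument behind (ii) and the observation that entire $h$ forces singularities over $\infty$ to be direct---and handled the passage from $p=1$ to general $p$ explicitly, which the paper addresses only via the remark following Theorem \ref{th:critvalue}.
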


\begin{proof}
 The assertion (\ref{item:entire}) follows from (\ref{eq:exceptional}) 
 with $a=\infty\in E(f)$,
 (\ref{item:periodic}) from Theorems \ref{th:extension} and \ref{th:DCA},
 and (\ref{item:postcrit}) from (\ref{eq:critvalue}) and Theorem \ref{th:indirect}.
\end{proof}

\section{Proof of Theorem \ref{th:extension}}\label{sec:extension}
For a meromorphic map $g$ on $\bC$, the following is straightforward:
for $\mathcal{A}=\{A_r\},\mathcal{B}=\{B_r\}\in\TS(g)(\subset\mathfrak{N})$,
\begin{gather}
 \mathcal{A}=\mathcal{B}\Leftrightarrow\mathcal{A}\sim\mathcal{B}.\label{eq:equivalence}
\end{gather}

Replacing $f^p$ by $f$ if necessary,
we assume that $p=1$. Put $\tilde{h}:=h\circ\lambda^{-1}$.
For every $\mathcal{A}=\{A_r\}\in\TS(h)$, 
we have $\{\lambda A_r\}=:\tilde{\mathcal{A}}=\{\tilde{A}_r\}\in\TS(\tilde{h})$.
We show that there exists the (unique)
$\mathcal{B}=\{B_r\}\in\TS(f\circ\tilde{h})(=\TS(h)$ since $f\circ\tilde{h}=h$)
with $\mathcal{B}\sim\tilde{\mathcal{A}}$:
put $a:=\tilde{h}(\tilde{\mathcal{A}})(=h(\mathcal{A}))$ and $b:=f(a)$.
For every $r>0$, there exists $s>0$ such that $U_s(a)\subset f^{-1}(U_r(b))$,
and let $B_r$ be the component of
$\tilde{h}^{-1}(f^{-1}(U_r(b)))$ with $\tilde{A_s}\subset B_r$. Then we have
$\mathcal{B}:=\{B_r\}\in\mathfrak{N}$.
Conversely, for every $r>0$, if $s>0$ is small enough, then
$(f(\tilde{h}(B_s))\subset)U_s(b)\subset f(U_r(a))$,
so that $\tilde{h}(B_s)\subset f^{-1}(f(U_r(a)))$.
Since $\tilde{A}_t\subset B_s\cap\tilde{A}_r$ for some $t>0$,
we have $\emptyset\neq\tilde{h}(B_s)\cap U_r(a)$.
Hence if $r>0$ is so small that $f:U_r(a)\to f(U_r(a))$ is proper, 
then $\tilde{h}(B_s)\subset U_r(a)$,
and so $(\emptyset\neq\tilde{A}_t\subset)B_s\subset\tilde{A}_r$.
Hence we have $\mathcal{B}\sim\tilde{\mathcal{A}}$,
and $\mathcal{B}\in\TS(f\circ\tilde{h})$.

We define $\Lambda:\TS(h)\to\TS(h)$ by $\Lambda\mathcal{A}:=\mathcal{B}$,
which is injective by (\ref{eq:equivalence}).
Moreover, if $r>0$ is small enough, then $U_r(a)\cap f^{-1}(b)=\{a\}$,
which implies that $\mathcal{B}=\Lambda\mathcal{A}$ is direct
if and only if so is $\mathcal{A}$.

We show the surjectivity of $\Lambda:\TS(h)\to\TS(f\circ\tilde{h})=\TS(h)$:
let $\mathcal{B}=\{B_r\}\in\TS(f\circ\tilde{h})$ and
put $b:=(f\circ\tilde{h})(\mathcal{B})$.
For every $t>0$, the inclusion $f(\tilde{h}(B_t))\subset U_t(b)$ gives
the component $V^{-1}_t$ of $f^{-1}(U_t(b))$ with
$\tilde{h}(B_t)\subset V^{-1}_t$. Then $V^{-1}_s\subset V^{-1}_t$ if $s<t$,
and $\bigcap_{t>0}V^{-1}_t$ is a singleton $\{a\}\subset f^{-1}(b)$.
Moreover, for every $r>0$, there is $s>0$ 
with $(\tilde{h}(B_s)\subset)V^{-1}_s\subset U_r(a)$, and hence
there exists the component $\tilde{A}_r$ of $\tilde{h}^{-1}(U_r(a))$
such that $B_s\subset\tilde{A}_r$.
Then $\tilde{\mathcal{A}}:=\{\tilde{A}_r\}\in\mathfrak{N}$.
Conversely, for every $r>0$, if $s>0$ is small enough,
then $((f\circ\tilde{h})(\tilde{A}_s)\subset)f(U_s(a))\subset U_r(b)$,
so that $(B_t\subset)\tilde{A}_s\subset B_r$ (for some $t>0$).
Hence $\tilde{\mathcal{A}}\sim\mathcal{B}$ and $\tilde{\mathcal{A}}\in\TS(\tilde{h})$.
Putting $\mathcal{A}:=\{\lambda^{-1}\tilde{A}_r\}\in\TS(\tilde{h}\circ\lambda)=\TS(h)$,
we have $\Lambda\mathcal{A}\sim\{\lambda(\lambda^{-1}\tilde{A}_r)\}=\tilde{\mathcal{A}}\sim\mathcal{B}$, so that $\Lambda\mathcal{A}=\mathcal{B}$ from (\ref{eq:equivalence}).\qed

\section{Proof of Theorem \ref{th:asym}}\label{sec:Schroder}

Let $f$ be a rational function on $\hbC$ of degree $d\ge 2$,
and $h$ a Schr\"oder map of $f$ as in \S \ref{sec:intro}.
Replacing $f^p$ by $f$ if necessary, 
we assume that period $p=1$, and
extend $\lambda:\bC\to\bC$ to $\Lambda:\TS(h)\to\TS(h)$ as
in Theorem \ref{th:extension}.
Fix $\mathcal{A}=\{A_r\}\in\TS(h)$, and 
put $a:=h(\mathcal{A})$ and $U_r:=U_r(a)$ $(r>0)$.
From (\ref{eq:extension}),
$\Lambda^{-k}\mathcal{A}\sim\{\lambda^{-k}A_r\}_{r>0}$ for every $k\in\bN$.

We introduce some notation.
The inclusion $f^k(h(\lambda^{-k}A_r))=h(A_r)\subset U_r$
gives the component $V^{-k}_r$ of $f^{-k}(U_r)$ such that 
$h(\lambda^{-k}A_r)\subset V^{-k}_r$. From the proof of 
the surjectivity of $\Lambda$ in \S \ref{sec:extension},
we have for every $k\in\bN$,
\begin{gather*}
 \bigcap_{r>0}V^{-k}_r=\{h(\Lambda^{-k}\mathcal{A})\}.
\end{gather*}

We first prove (\ref{eq:periodic}) and (\ref{eq:directasym}),
leaving (\ref{eq:unhyperbolic}) to the end.

\bold{Periodic $\mathcal{A}$:}
in the case that $\mathcal{A}$ is periodic under $\Lambda$,
without loss of generality,
we assume that $\Lambda^{-1}\mathcal{A}=\mathcal{A}$,
so that for every $k\in\bN$, $\mathcal{A}\sim\{\lambda^{-k}A_r\}_{r>0}$
and $\bigcap_{r>0}V^{-k}_r=\{a\}$.

When $f'(a)=0$, we immediately have $a\in\AT(f)$. 
Suppose that $f'(a)\neq 0$. If $r>0$ is small enough,
then for every $t\in(0,r]$, $f:V^{-1}_t\to U_t$ is univalent
and fixes $a$, and we denote its inverse by 
\begin{gather*}
 f^{-1}_t:U_t\to V^{-1}_t.
\end{gather*}
Since $\bigcap_{r>0}A_r=\emptyset$, 
diminishing $r>0$ if necessary, we may suppose that 
\begin{gather}
 A_r\subset\{|w|\ge 1\}.\label{eq:definite}
\end{gather}

If $a$ is a repelling or Siegel fixed point of $f$,
then there exists $t\in(0,r)$ such that 
for every $k\in\bN$, $(f^{-1}_r)^k$ is well defined on $U_t$ and 
\begin{gather*}
 h(\lambda^{-k}A_t)\subset V^{-k}_t=(f^{-1}_r)^k(U_t)\subset U_r,
\end{gather*}
so that $(A_s\subset)\lambda^{-k}A_t\subset A_r$ (for some $s>0$
since $\mathcal{A}\sim\{\lambda^{-k}A_r\}$),
which contradicts (\ref{eq:definite}) since always $|\lambda|^{-1}<1$.
Now we have proved (\ref{eq:periodic}). 
 
If $a\in\PB(f)\cup\CM(f)$, then for every $t\in (0,r)$,
Theorem \ref{th:hedgehog} yields a Siegel compactum $K_t$
associated to $(f^{-1}_{t/2},U_{t/2},a)$, 
so that $f^{-1}_t(K_t)=K_t\subset U_t$.
For every component $L$ of $h^{-1}(K_t)$ with $L\subset A_t$,
the inclusion $f(h(\lambda^{-1}L))=h(L)\subset K_t$ implies that
\begin{gather*}
 h(\lambda^{-1}L)\subset h(\lambda^{-1}A_t)\cap f^{-1}(K_t)\subset V^{-1}_t\cap f^{-1}(K_t)=f^{-1}_t(K_t)=K_t.
\end{gather*}
Let $\tilde{L}$ be the component of $h^{-1}(K_t)$
such that $\lambda^{-1}L\subset\tilde{L}$. Then 
$h(\lambda\tilde{L})=f(h(\tilde{L}))\subset f(K_t)=f(f^{-1}_t(K_t))=K_t$,
so that $(L\subset)\lambda\tilde{L}\subset L$.
From $\mathcal{A}=\Lambda^{-1}\mathcal{A}\sim\{\lambda^{-1}A_r\}$,
by decreasing $t\in(0,r)$ if necessary, we have
\begin{gather*}
 \tilde{L}=\lambda^{-1}L\subset\lambda^{-1}A_t\cap h^{-1}(K_t)\subset A_r\cap h^{-1}(U_t).\label{eq:induction}
\end{gather*}
If $\mathcal{A}$ were also direct, then diminishing $r>0$ if necessary,
we even have $A_r\cap h^{-1}(U_t)=A_t$ from Lemma \ref{th:directunique}.
Hence by induction,
for every $k\in\bN$, $\lambda^{-k}L$ must be a component of $h^{-1}(K_t)$ 
such that $\lambda^{-k}L\subset A_t(\subset A_r)$.
This contradicts (\ref{eq:definite}) as before.

Thus we have proved (\ref{eq:directasym})
since every direct transcendental singularity of $h$ is periodic under $\Lambda$
from Corollary \ref{th:postcrit} (\ref{item:periodic}).

\bold{Indirect $\mathcal{A}$:} 
we now assume that $\mathcal{A}$ is non-periodic and indirect, and show
(\ref{eq:unhyperbolic}) 
by eliminating any other possibility for $a=h(\mathcal{A}$). 

Suppose that $a\in F(f)$. For every $k\in\bN\cup\{0\}$, 
let $D^{-k}$ be the Fatou component of $f$ with
$h(\Lambda^{-k}\mathcal{A})\in D^{-k}$, so that $f(D^{-(k+1)})=D^{-k}$.

For every $k\in\bN\cup\{0\}$, $D^{-k}$ is cyclic:
otherwise, all $D^{-k}$ for $k$ large enough are not cyclic,
and are mutually disjoint. Then since $\#C(f)<\infty$, 
we have for all large $k$, $D^{-k}\cap C(f)=\emptyset$, i.e.,
$D^{-k}\cap(\bigcup_{\ell\in\bN}f^{\ell}(C(f)))=\emptyset$.
Hence by Corollary \ref{th:postcrit} (\ref{item:postcrit}),
all $\Lambda^{-k}\mathcal{A}$ for $k$ large enough
are not only distinct but direct, which contradicts (\ref{eq:DCA}).

From this fact and Corollary \ref{th:postcrit} (\ref{item:postcrit}),
we must have one of two alternatives: 
$a=h(\mathcal{A})\in\AT(f)$ (desirable) or
all $D^{-k}$ are rotation domains of $f$.
However, the second situation cannot occur: fix $r>0$ with $U_r\subset D^0$. 
Then for every $k\in\bN$, 
$h(\Lambda^{-k}\mathcal{A})\in V^{-k}_r\subset D^{-k}$, so that
\begin{gather*}
 h(\lambda^{-k}A_r)\subset D^{-k}.
\end{gather*}
If all $D^{-k}$ were rotation domains, then 
$f^k:D^{-k}\to D^0$ is univalent for every $k\in\bN$.
Since $h'(0)\neq 0$, $h|\{|w|<t\}$ is univalent for $t>0$ small enough,
and then by repeated use of (\ref{eq:Schroder}),
\begin{multline*}
 (h:\{w\in A_r;|w|<|\lambda|^kt\}\to D_0)\\
=(f^k:D^{-k}\to D^0)\circ(h:\{w\in\lambda^{-k}A_r;|w|<t\}\to D^{-k})
\circ\lambda^{-k}
\end{multline*}
is univalent for all $k\in\bN$. Hence $A_r\cap C(h)=\emptyset$,
which contradicts Theorem \ref{th:indirect} since $\mathcal{A}$ is indirect.

Finally, suppose that $a\in J(f)\setminus(\PB(f)\cup M(f))$.
We apply Theorem \ref{th:recurrent} to $U=U_r(a)$ for $r>0$ small enough.
Fix $t>0$ such that $h|\{|w|<t\}$ is univalent, and for this $t>0$,
put $\phi_t:=(h|\{|w|<t\})^{-1}:h(\{|w|<t\})\to\{|w|<t\}$.
Also fix $s>0$ such that $U_{2s}(z_0)\Subset h(\{|w|<t\})$
($z_0=h(0)$).
For all large $k\in\bN$, (\ref{eq:schrink}) shows that 
$\diam V^{-k}_r<s$, and since $0\in\phi_t(U_s(z_0))$,
\begin{gather}
 \lambda^{-k}A_r\cap\phi_t(U_s(z_0))\neq\emptyset.\label{eq:accumulation}
\end{gather} 
Hence from $h(\lambda^{-k}A_r)\subset V^{-k}_r$, we have
$V^{-k}_r\cap U_s(z_0)\neq\emptyset$, and since $\diam V^{-k}_r<s$,
$V^{-k}_r\subset U_{2s}(z_0)$.
We recall again (\ref{eq:accumulation}) and deduce that
$\lambda^{-k}A_r\subset\phi_t(U_{2s}(z_0))$. This cannot be true
since $A_r$ is unbounded. \qed

\section{Proof of Theorem \ref{th:attpara}}\label{eq:asympath}
Replacing $f$ by an appropriate iterate if necessary,
we assume that $N=1$, so that $\lambda W=W$, $f(D)=D$ and $f(a)=a$.

Fix $w_0\in W$, consider an arc $\gamma:[0,1]\to W$
with $\gamma(0)=w_0$ and $\gamma(1)=\lambda w_0$, and
extend the domain of $\gamma$ to $(-\infty,\infty)$ via the functional equation
(\ref{eq:translation}) (with $N=1$). Then for every $k\in\bN$ and every $s\in[0,1]$,
$\gamma(k+s)=\lambda^k(\gamma(s))\in\lambda^k(\gamma([0,1]))$
and $h(\gamma(k+s))\in f^k(h(\gamma([0,1])))$. 
Since $\gamma([0,1])\subset(W\subset)\bC^*$ and $h(\gamma([0,1]))\subset D$,
we have $\gamma(t)\to\infty$ and $h(\gamma(t))\to a$ as $t\to\infty$.
Thus $\gamma$ is as described in Theorem \ref{th:attpara},
to which $\mathcal{A}^{\gamma}\in\TS(h)$ may be associated.

From now on, suppose that $a\in\AT(f)$.

If $\gamma(0)=w_0\in W\cap h^{-1}(a)$, then by $f(a)=a$ and
(\ref{eq:Schroder}), $\gamma(k)=\lambda^kw_0\in h^{-1}(a)$ for every $k\in\bN$.
Hence, since $\gamma(k)\to\mathcal{A}^{\gamma}$ as $k\to\infty$,
$\mathcal{A}^{\gamma}$ is indirect.

Next, suppose that $f^{-1}(a)\cap D\neq\{a\}$. 
First, putting the {\itshape backward orbits}
$\mathcal{BO}:=\bigcup_{n\in\bN}(f:D\to D)^{-n}(a)$ of $a$ in $D$, 
we have $\#\mathcal{BO}=\infty$: otherwise,
since $\mathcal{BO}$ is $(f:D\to D)^{-1}$-invariant,
every $a'\in\mathcal{BO}$ must be periodic, and then $a'$ must equal $a$,
so that $f^{-1}(a)\cap D=\{a\}$, which is a contradiction.
Second, we also have $\#(D\setminus h(W))<\infty$:
for each $b\in D\setminus h(W)$, there exists $\mathcal{B}=\{B_r\}\in\mathfrak{N}$
such that for all small $r>0$, $B_r$ is a component of $U_r(b)$ with $B_r\subset W$.
We claim that $\bigcap_{r>0}B_r=\emptyset$:
otherwise, $\bigcap_{r>0}B_r$ must be a singleton
in $h^{-1}(b)$, and hence $h(W)\supset h(\bigcap_{r>0}B_r)=\{b\}$,
which is a contradiction. Hence $\mathcal{B}\in\TS(h)$ with $h(\mathcal{B})=b$,
and $\mathcal{B}$ must be direct since $b\not\in h(W)$. 
This with (\ref{eq:DCA}) 
yields $\#(D\setminus h(W))\le\#(\text{direct singularities of }h)<\infty$.
Consequently, $\#(\mathcal{BO}\cap h(W))=\infty$,
which provides $w_1\in W$ such that $f^n(h(w_1))=a$ for some $n\in\bN$.
Thus $h(\lambda^nw_1)=a$ from (\ref{eq:Schroder}), so that
$\lambda^nw_1\in \lambda^nW\cap h^{-1}(a)=W\cap h^{-1}(a)$. 
Hence $W\cap h^{-1}(a)\neq\emptyset$. 

Finally, suppose that some $\mathcal{A}=\{A_r\}\in\TS(h)$ 
with $h(\mathcal{A})=a$ is indirect 
and that $A_r\subset W$ for some $r>0$.
Then by Theorem \ref{th:indirect},
there is $c\in A_r\cap C(h)\setminus h^{-1}(a)(\subset W)$.
Fix $t>0$ such that $h|\{|w|<t\}$ is univalent (using $h'(0)\neq 0$),
and fix $\ell\in\bN$ such that $\lambda^{-\ell}c\in\{|w|<t\}$ 
(using $|\lambda|>1$).
By (\ref{eq:Schroder}) and the chain rule,
\begin{gather*}
 0=h'(c)=(f^{\ell}\circ h\circ\lambda^{-\ell})'(c)
  =\prod_{i=0}^{\ell-1}f'(f^i(h(\lambda^{-\ell}c)))\cdot h'(\lambda^{-\ell}c)\cdot\lambda^{-\ell},
\end{gather*} 
which implies that $f^i(h(\lambda^{-\ell}c))\in C(f)$ for some $i\in\{0,\ldots,\ell-1\}$. 
For this $i$, from (\ref{eq:Schroder}) and $f(a)=a\neq h(c)$, we also have
\begin{gather*}
  a\neq f^i(h(\lambda^{-\ell}c))
 =h(\lambda^{-(\ell-i)}c)\in h(\lambda^{-(\ell-i)}W)=h(W)\subset D.
\end{gather*} 
Hence $D\cap C(f)\neq\{a\}$.
\qed

\section{Proofs of Corollaries \ref{th:PLY} and \ref{th:hypdense}}\label{sec:example}

\begin{proof}[Proof of Corollary $\ref{th:PLY}$]

For a transcendental entire function $g$ and $\mathcal{A}=\{A_r\}\in\TS(g)$,
we denote by $A_r^*(0)$ the component of $\bC\setminus A_r$ containing $0$
(if exists), and say that $\mathcal{A}$ is {\itshape non-annular} if
$A_r^*(0)$ is unbounded for all small $r>0$.
Note that if $\mathcal{A}^j=\{A_r^j\}\in\TS(g)$
with $g(\mathcal{A}^j)=\infty$ ($j=1,\ldots,q'$) are mutually distinct,
then $\{\mathcal{A}^j\}_j$ must be {\itshape totally separated} in that 
\begin{gather}
 \bigcup_{i\neq j}A_r^i\subset (A_r^j)^*(0)\ (j=1,\ldots,q')\label{eq:separated} 
\end{gather}
for all small $r>0$. (\ref{eq:separated}) implies that when $q'\geq 2$,
all $\mathcal{A}^j$ ($j=1,\ldots,q'$) are non-annular.

To prove (\ref{eq:general}), we need a spiral version of Ahlfors's theorem
(see our discussion of (\ref{eq:general})),
which we formulate here as Theorem \ref{th:spiral};
the argument which seems useful to us is from Jenkins \cite[\S 3]{Jenkins87}
and Hayman \cite[Theorem 8.21]{Hayman89}.

\begin{theorem}\label{th:spiral}
 Let $g$ be an entire function of finite order $\rho$,
 consider mutually distinct $\mathcal{A}^j\in\TS(h)$ with $g(\mathcal{A}^j)=\infty$
 $(j=1,\ldots,q')$, and suppose that $\mathcal{A}^1$ is non-annular when $q'=1$.
 Then for every asymptotic arc $\gamma$ of $h$ to which $\mathcal{A}^1$ is associated,
 \begin{gather}
  q'\cdot\left(1+\limsup_{t\to\infty}\left(\frac{\arg\gamma(t)}{\log|\gamma(t)|}\right)^2\right)\le 2\rho.\label{eq:spiralinfinite}
 \end{gather}
\end{theorem}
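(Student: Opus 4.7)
We plan to derive \eqref{eq:spiralinfinite} by the Ahlfors-Carleman length-area method, modified to respect the logarithmic-spiral direction of $\gamma$, following Jenkins \cite{Jenkins87} and Hayman \cite[Theorem 8.21]{Hayman89}.

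Set $\alpha:=\limsup_{t\to\infty}(\arg\gamma(t))/\log|\gamma(t)|$, assumed finite (the finite-order hypothesis rules out $\alpha=\infty$). Pass to the logarithmic coordinate $\eta=\log w$: with $r>0$ chosen so small that, by Lemma \ref{th:directunique} together with \eqref{eq:separated} (when $q'\ge 2$) or the non-annularity hypothesis (when $q'=1$), the nested components $A_r^j$ are mutually separated and unbounded, each tract $A_r^j$ lifts to a $2\pi i$-periodic tract $\hat A^j$ in the $\eta$-plane; let $\hat\theta_j(\xi)$ denote the $\mathrm{Im}\eta$-measure of $\hat A^j\cap\{\mathrm{Re}\eta=\xi\}$ taken in one period, so $\sum_j\hat\theta_j(\xi)\le 2\pi$. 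The arc $\gamma$ lifts to $\hat\gamma\subset\hat A^1$, approaching infinity in direction $(1,\alpha)$.

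The key spiral adaptation is to apply Carleman's length-area inequality in the $\eta$-plane with the tract width measured in the direction perpendicular to $(1,\alpha)$ and the arc-length parameter $s=\sqrt{1+\alpha^2}\,\xi$ along the arc direction. The perpendicular projection of an $\mathrm{Im}\eta$-interval of length $\hat\theta_j(\sigma)$ onto the line perpendicular to $(1,\alpha)$ has length $\hat\theta_j(\sigma)/\sqrt{1+\alpha^2}$; together with $ds=\sqrt{1+\alpha^2}\,d\sigma$, each of these factors of $\sqrt{1+\alpha^2}$ contributes to the resulting Carleman estimate, so that for the subharmonic function $\hat u_j(\eta):=\max\{0,\log|g(e^\eta)|-M\}$ (vanishing off $\hat A^j$ for $M$ large, by directness of $\mathcal{A}^j$) we obtain
\begin{gather*}
\log\max_{\mathrm{Re}\eta\le\xi}\hat u_j\ge \pi(1+\alpha^2)\int_{\xi_0}^{\xi}\frac{d\sigma}{\hat\theta_j(\sigma)}+O(1).
\end{gather*}
Combined with the Cauchy-Schwarz inequality $(q')^2\le(\sum_j\hat\theta_j)(\sum_j 1/\hat\theta_j)\le 2\pi\sum_j 1/\hat\theta_j$, summing the $q'$ estimates and averaging extracts an index $j^\ast$ satisfying $\int d\sigma/\hat\theta_{j^\ast}\ge q'(\xi-\xi_0)/(2\pi)$, giving
\begin{gather*}
\log\max_{\mathrm{Re}\eta\le\xi}\hat u_{j^\ast}\ge \frac{(1+\alpha^2)q'}{2}(\xi-\xi_0)+O(1).
\end{gather*}
On the other hand, the order hypothesis gives $\hat u_{j^\ast}(\eta)\le\log|g(e^\eta)|\le Ce^{(\rho+\epsilon)\xi}$, hence $\log\max\hat u_{j^\ast}\le(\rho+\epsilon)\xi+O(1)$. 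Chaining the two inequalities, letting $\xi\to\infty$ and then $\epsilon\to 0$, we obtain $q'(1+\alpha^2)\le 2\rho$, which is \eqref{eq:spiralinfinite}.

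The main obstacle is the rigorous formulation of the "perpendicular-to-arc" Carleman estimate: one must verify that the tract's effective geometric width in the direction perpendicular to $(1,\alpha)$ is controlled by $\hat\theta_j/\sqrt{1+\alpha^2}$ uniformly along the arc, and that this ingredient combines cleanly with arc-length parametrisation to multiply the standard Ahlfors factor by a total of $1+\alpha^2$. The cleanest route is via the rotation $\tilde\eta=\eta(1-i\alpha)/\sqrt{1+\alpha^2}$, which straightens the arc direction to the positive real axis and presents the tract as slanted-periodic with period vector $2\pi(\alpha+i)/\sqrt{1+\alpha^2}$, bookkeeping the two $\sqrt{1+\alpha^2}$ factors explicitly. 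A secondary subtlety is the case $q'=1$, where the non-annularity hypothesis is precisely the topological requirement ensuring that $A_r^1$ separates $\bC$ into two unbounded components, without which the single-tract Ahlfors estimate degenerates.
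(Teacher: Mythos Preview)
The paper does not supply its own proof of Theorem~\ref{th:spiral}: it formulates the statement inside the proof of Corollary~\ref{th:PLY} and defers entirely to Jenkins~\cite[\S3]{Jenkins87} and Hayman~\cite[Theorem~8.21]{Hayman89}, then uses the inequality as a black box. Your plan follows exactly those two references, so there is nothing in the paper itself to compare against.

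On the sketch: the step you yourself flag as ``the main obstacle'' is a real one, and the shortcut you propose does not close it. The assertion that the perpendicular projection of a vertical segment of length $\hat\theta_j(\sigma)$ has length $\hat\theta_j(\sigma)/\sqrt{1+\alpha^2}$ is correct as elementary geometry, but Carleman's inequality in the rotated frame requires the \emph{measure of the cross-section} of $\hat A^j$ by a line perpendicular to $(1,\alpha)$, and that is neither equal to this projection nor a fixed multiple of $\hat\theta_j(\sigma)$ for a general tract. After the rotation $\tilde\eta=\eta(1-i\alpha)/\sqrt{1+\alpha^2}$ the period $2\pi i$ becomes $2\pi(\alpha+i)/\sqrt{1+\alpha^2}$, which has nonzero real part, so a single vertical line in the $\tilde\eta$-plane meets infinitely many fundamental domains and the bound $\sum_j\tilde\theta_j\le 2\pi/\sqrt{1+\alpha^2}$ is not available as stated. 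Finally, $\alpha$ is only a $\limsup$, so no fixed rotation straightens $\hat\gamma$. In the cited proofs the length--area estimate is run along $\hat\gamma$ itself, with cross-cuts taken transverse to $\hat\gamma$ at each point; it is the arc-length of $\hat\gamma$ between radii, together with the confinement of all $q'$ tracts of one period between consecutive $2\pi i$-translates of $\hat\gamma$, that simultaneously absorbs the oscillation of $\arg\gamma(t)/\log|\gamma(t)|$ and manufactures the factor $1+\alpha^2$.
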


\begin{remark}
 The assumption that $\mathcal{A}^j$ is non-annular
 is required because we work with the image of $A_r^j$
 under a branch of logarithm in the proof. 
 An entire function of order $<1/2$ shows that such a condition is essential.
\end{remark} 

We have already hinted at the proof of Corollary \ref{th:PLY} in \S \ref{sec:intro}.

By Eremenko and Levin, 
each component $W$ of $h^{-1}(D_{\infty})$ is periodic under $\lambda$. 
Thus $W\subset h^{-1}(F(f))$, $0\in\overline{W}\cap h^{-1}(J(f))$
(since $|\lambda|>1$) and
\begin{gather}
 0\in\partial W.\label{eq:boundary} 
\end{gather}

For a given $W$, Theorem \ref{th:attpara} yields an asymptotic arc 
$\gamma_W:(-\infty,\infty)\to W$ with $\lim_{t\to\infty}h(\gamma_W(t))=\infty$,
to which $\mathcal{A}^W=\{A_r^W\}\in\TS(h)$ may be associated.
We check that $\{\mathcal{A}^W\}_W$ is totally separated, and
that (\ref{eq:ELcondition}) implies that all $\mathcal{A}^W$ are non-annular.
Then Theorem \ref{th:spiral} may be applied to $\{\mathcal{A}^W\}_W$,
and all $\gamma_W$ satisfy (\ref{eq:general}) (for $q'=q_{\infty}$).

For all small $r>0$, $U_r(\infty)\subset D_{\infty}$, so that
\begin{gather}
 A^W_r\subset W.\label{eq:contained}
\end{gather}
Hence (\ref{eq:boundary}) and (\ref{eq:contained}) show that
$\{\mathcal{A}^W\}_W$ is totally separated.

Let $\tilde{J_0}$ be the component of $h^{-1}(J(f))$ containing $0$.
Since $W\subset h^{-1}(F(f))$, $\tilde{J_0}\cap W=\emptyset$.
Hence from (\ref{eq:boundary}) and (\ref{eq:contained}), we have 
\begin{gather}
 \tilde{J_0}\subset(A^W_r)^*(0),\label{eq:Julia}
\end{gather}
using the notation introduced at the beginning of this section. 
By the $f$-invariance of $J(f)$ and (\ref{eq:Schroder}), we also have
\begin{gather*}
 \lambda\tilde{J_0}\subset\tilde{J_0}. 
\end{gather*}

From now on, we assume (\ref{eq:ELcondition}), which is equivalent to
\begin{gather*}
 \tilde{J_0}\neq\{0\} 
\end{gather*}
since $h$ is analytic near $0$ (and $h(0)=z_0$).
Then $\tilde{J_0}$ is unbounded (since $|\lambda|>1$) and 
$\mathcal{A}^W$ is non-annular (by (\ref{eq:Julia})). 

The fact that $\tilde{J_0}$ is unbounded (when (\ref{eq:ELcondition}) holds)
also implies that $W$ can contain no closed curve winding around $0$,
so that there is a single-valued branch $\arg_W(\cdot)$ of $\arg(\cdot)$ on $W$.

Using Theorem \ref{th:attpara}, we may assume in addition that 
$\gamma_W$ satisfies (\ref{eq:translation}), and hence (\ref{eq:orbits}).
For each $W$ and each $R>0$, let $t_W(R)$ be the least $t>0$
such that $|\gamma_W(t)|=R$, and put $P_W(R):=\gamma_W(t_W(R))$.
The set $\{P_W(R)\}_W$ has a natural cyclic order on the
(counterclockwise-oriented) circle $\{|w|=R\}$,
which induces a cyclic order of components $W$ of $h^{-1}(D_{\infty})$
such that $W$ is the $j$-th component of $h^{-1}(D_{\infty})$ if and only if
$P_W(R)$ is the $j$-th point in $\{P_W(R)\}_W$. Let us denote by $W_j$
the $j$-th component of $h^{-1}(D_{\infty})$ ($j=0,\ldots,q_{\infty}-1$).
Replacing $\gamma_{\lambda W}$ if necessary, 
we may further assume that for every $W$, $\gamma_{\lambda W}=\lambda\cdot\gamma_W$,
so that $P_{\lambda W}(|\lambda|R)=\lambda\cdot P_W(R)$.
This yields the unique $p_{\infty}=p_{\infty}(h)\in\{0,\ldots,q_{\infty}-1\}$
such that for each $j=0,\ldots,q_{\infty}-1$,
\begin{gather}
 \lambda W_j=W_{j+p_{\infty}(\text{mod }q_{\infty})}.\label{eq:cyclic}
\end{gather}

Hence, recalling that $q_W=\min\{N\in\bN;\lambda^N W=W\}$,
we observe that $q:=q_W$ is independent of $W$ and that 
$q_{\infty}/q=:m_{\infty}=m_{\infty}(h)\in\bN$ is 
the number of cycles of components of $h^{-1}(D_{\infty})$ under $\lambda$.
We also recall $p_W$ from (\ref{eq:argument}), 
and observe that $p_W\cdot q_{\infty}=q_W\cdot p_{\infty}$. 
Hence $p_W=p_{\infty}/m_{\infty}=:p$ is also independent of $W$.

Now the proof of Corollary \ref{th:PLY} is completed by
the spiral inequality (\ref{eq:general}) for $\gamma_W$ together with the calculation
\begin{gather*}
\frac{\arg_W(\gamma_W(k))}{\log|\gamma_W(k)|}
=\frac{k(q\arg\lambda-2\pi p)+\arg_W(\gamma_W(0))}{\log(|\lambda|^{k\cdot q}|\gamma_W(0)|)}
\end{gather*}
from (\ref{eq:orbits}) and (\ref{eq:argument}) for any $W$.
\end{proof}

\begin{remark}
The fact that $\infty\in D_{\infty}$ also implies that
every asymptotic arc $\gamma:(-\infty,\infty)\to\bC$
with $\lim_{t\to\infty}h(\gamma(t))=\infty$ is contained in some component
$W$ of $h^{-1}(D_{\infty})$. Hence to show (\ref{eq:general}) for 
this $\gamma$, we can always take $\gamma$ as $\gamma_W$ from the above proof.
\end{remark}

\begin{proof}[Proof of Corollary $\ref{th:hypdense}$]
 The following, which uses standard ideas but appears to be new,
 may have independent interest. The hypothesis of finite order is essential.
 For completeness, we include the proof in \S \ref{sec:kysymus}.

 \begin{mainth}\label{th:Kysymus}
 Let $g$ be an entire function of finite order $\rho$.
 If $g$ does not cover $a\in\bC$ completely
 $($in the same sense as discussed before Corollary $\ref{th:hypdense})$,
 then $a\in g(\TS(g))$.
 \end{mainth}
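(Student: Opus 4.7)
The plan is to construct a transcendental singularity $\mathcal{A}=\{A_r\}\in\TS(g)$ with $g(\mathcal{A})=a$ out of the unbounded components of $g^{-1}(U_r(a))$ supplied by the hypothesis.

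First, I will choose for each $n\in\bN$ an unbounded component $V_n$ of $g^{-1}(U_{1/n}(a))$, and for every $k\leq n$ let $W_n^{(k)}$ denote the unique component of $g^{-1}(U_{1/k}(a))$ containing $V_n$, which is unbounded because $V_n\subset W_n^{(k)}$. This yields a family of unbounded components at every level $1/k$, organized into a tree by inclusion.

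The key step is to select a nested thread $W^{(1)}\supset W^{(2)}\supset\cdots$, each $W^{(k)}$ an unbounded component of $g^{-1}(U_{1/k}(a))$. The finite-order hypothesis enters here through the Denjoy-Carleman-Ahlfors bound (\ref{eq:DCAentire}), which forces $\#\{\mathcal{A}\in\TS(g):g(\mathcal{A})\in\bC\}\leq 2\rho$. I will argue that for each fixed $k$ only finitely many distinct components appear in $\{W_n^{(k)}:n\geq k\}$: any $W$ occurring for infinitely many $n$ contains unbounded components of $g^{-1}(U_s(a))$ for arbitrarily small $s>0$, and a recursive application of the same construction inside $W$ yields a transcendental singularity of $g$ over $a$ whose tracts eventually lie in $W$; distinct such persistent $W$'s would produce distinct singularities, contradicting the DCA bound. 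Once finiteness is in hand, a diagonal/pigeonhole extraction delivers the thread. Making this finiteness step rigorous is what I expect to be the principal obstacle, since one must carefully rule out the possibility of infinite branching in the tree of tracts.

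With the thread in hand, I will define $A_r$, for $r\in[1/(k+1),1/k)$, to be the unbounded component of $g^{-1}(U_r(a))$ containing $W^{(k+1)}$. The family $\{A_r\}_{r>0}$ is then decreasing with every member unbounded, and it remains to verify $\bigcap_{r>0}A_r=\emptyset$. Any $z\in\bigcap_{r>0}A_r$ satisfies $g(z)\in\bigcap_{r>0}U_r(a)=\{a\}$, so $z$ is an isolated preimage of $a$; writing $g(w)-a=(w-z)^m h(w)$ locally with $h(z)\neq 0$, the component of $g^{-1}(U_r(a))$ containing $z$ is, for all sufficiently small $r$, a bounded topological disk around $z$ whose boundary is a level curve of $|g-a|$ separating it from the rest of $g^{-1}(U_r(a))$. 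This contradicts $A_r$ being unbounded, so $\bigcap_{r>0}A_r=\emptyset$ and $\mathcal{A}:=\{A_r\}\in\TS(g)$ with $g(\mathcal{A})=a$, giving $a\in g(\TS(g))$ as required.
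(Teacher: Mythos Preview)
Your overall framework---build the forest of unbounded tracts, extract a nested thread, verify it defines an element of $\TS(g)$ over $a$---is the right one, and your check that $\bigcap_{r>0}A_r=\emptyset$ is fine. The gap is exactly where you locate it. Your argument that $\{W_n^{(k)}:n\ge k\}$ is finite runs: each $W$ occurring for infinitely many $n$ yields, by ``recursive application of the same construction inside $W$'', a singularity over $a$ whose tracts eventually lie in $W$; distinct persistent $W$'s give distinct singularities, so (\ref{eq:DCAentire}) bounds their number. This fails for two reasons. First, the recursion is the theorem itself applied inside $W$: producing that singularity requires the very finiteness you are establishing, now at deeper levels, with no base case---an infinite regress rather than a proof. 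Second, even granting the bound on \emph{persistent} $W$'s, nothing rules out infinitely many components each occurring only finitely often; then no component is hit infinitely often and your diagonal extraction fails. The bound (\ref{eq:DCAentire}) counts elements of $\TS(g)$, not unbounded components of a fixed preimage, so it cannot be invoked before singularities are in hand.

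The paper avoids this circularity by a direct analytic argument that never mentions $\TS(g)$: fixing $p>2\rho$, it shows that unbounded tracts $\Delta_1,\ldots,\Delta_p$ at $p$ distinct radii $r_1>\cdots>r_p$ cannot be pairwise disjoint. If they were, a harmonic-measure/two-constants estimate (Lemma~\ref{th:setF}) would force $g$ to be bounded on one of the complementary regions $F$ between two tracts, while a Lindel\"of argument (Lemma~\ref{th:Lindelof}), using that $|g|$ takes the distinct constant values $r_m\neq r_n$ on the two unbounded boundary arcs of $F$, shows $g$ must be unbounded there. This bounds antichains in the tract forest by $2\rho$, and your intended extraction then goes through.
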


 By Corollary \ref{th:postcrit} (\ref{item:entire}), 
 every Schr\"oder map $h$ of $f_c$ must be entire,
 and hence by Corollary \ref{th:postcrit} (\ref{item:periodic}),
 every $\mathcal{A}\in\TS(h)$ is periodic.
 
 Let $c\in\Int\mathcal{C}$ (recall $\mathcal{C}$ from the end of \S 1).
 Then $f_c$ has no indifferent periodic point
 (cf.\ \cite[Theorem 4.8]{McMullen:renorm}), and hence
 from (\ref{eq:periodic}) of Theorem \ref{th:asym},
 $h(\TS(h))\subset\AT(f_c)$ for every Schr\"oder map $h$ of $f_c$.

 If a Schr\"oder map $h$ of $f_c$ does not cover $a\in\bC$ completely,
 then $a\in h(\TS(h))$ by Theorem \ref{th:Kysymus},
 so that as we just observed, 
 we have even $a\in\AT(f_c)$, which implies that $c\in\mathcal{H}$.
 Conversely, if $c\in\mathcal{H}$, then there is $a\in\AT(f_c)\cap\bC$,
 and by Corollary \ref{th:attparaconverse},
 there is a Schr\"oder map $h$ of $f_c$ such that $a\in h(\TS(h))$.
 Clearly, $h$ cannot cover $a$ completely.
\end{proof}

\section{Proof of Theorem \ref{th:Kysymus}}\label{sec:kysymus}
The hypotheses guarantee that $g$ is transcendental.
A consequence of this (with the identity theorem) is that
for every $r>0$, the cardinality of a subset of $S(r):=\{|z|=r\}$
on which either $|g|$ or $\arg g$ (mod $2\pi$) is constant must be finite.

We may suppose that $a=0$, and consider a sequence $(r_n)\subset\bR_{>0}$
with $r_n\searrow 0$ as $n\to\infty$ and a sequence of unbounded components
$\Delta_n$ of preimages of $\bD_n=\{|w|<r_n\}$ under $g$.
It is enough to show that if $p>2\rho$, 
then $\Delta_n$ ($n=1,\ldots, p$) cannot be mutually disjoint.

Suppose that $\Delta_n$ ($n=1,\ldots, p$) were mutually disjoint.
Increasing each $r_n$ slightly if necessary,
we can assume that no critical value of $g$ lies on $\bigcup_{n=1}^p\{|w|=r_n\}$.
From the observation above, the boundary $\partial\Delta_n$ ($n=1,\ldots,p$)
consists of finitely many (analytic) Jordan arcs whose endpoints are both $z=\infty$.

The next lemma contains the main idea.

\begin{lemma}\label{th:Lindelof}
Let $F$ be a simply-connected unbounded component of 
$\{|z|>r\}\setminus(\Delta_m\cup\Delta_n)$, where $m\neq n$,
whose boundary consists of a subarc of $S(r)$ for some $r>0$
and unbounded Jordan subarcs $\gamma$ of $\partial\Delta_m$ and
$\gamma'$ of $\partial\Delta_n$, each of which has an endpoint on $S(r)$.
Then $g$ is unbounded in $F$.
\end{lemma}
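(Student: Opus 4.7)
The plan is to argue by contradiction: suppose $|g(z)|\le M$ for all $z\in F$. Since $|g|\equiv r_m$ on $\gamma\subset\partial\Delta_m$ while $|g|<r_m$ throughout $\Delta_m$, and similarly across $\gamma'\subset\partial\Delta_n$, the maximum principle propagates the assumed bound across $\gamma$ and $\gamma'$, giving $|g|\le M$ on the simply connected unbounded open set $\Omega:=F\cup\Delta_m\cup\Delta_n$. This preliminary reduction replaces the original region by a larger one whose boundary in $\bC$ is essentially bounded, so that the only remaining ``boundary at infinity'' is the prime end of $\Omega$ over $\infty$.

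The next step is to transport the problem to a canonical model. Take a Riemann map $\phi\colon\mathbb{H}\to F$ sending the prime end of $F$ at $\infty$ to $\infty\in\partial\mathbb{H}$; under Carath\'eodory boundary correspondence (justified because $\partial F$ is locally connected, being a union of analytic Jordan arcs), the preimages $\phi^{-1}(\gamma)$ and $\phi^{-1}(\gamma')$ are two unbounded intervals $(-\infty,a)$ and $(b,\infty)$, while $\phi^{-1}(C)=[a,b]$. The composition $G:=g\circ\phi$ is a bounded holomorphic function on $\mathbb{H}$ whose a.e.\ Fatou boundary values satisfy $|G|\equiv r_m$ on $(-\infty,a)$ and $|G|\equiv r_n$ on $(b,\infty)$, with $r_m\ne r_n$. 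The Poisson representation of $\log|G|$ then forces $\log|G(iy)|\to\tfrac12(\log r_m+\log r_n)$ as $y\to\infty$, which pins down an asymptotic direction $\phi(iy)\to\infty$ in $F$ along which $|g|$ approaches the specific value $\sqrt{r_mr_n}\notin\{r_m,r_n\}$.

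The contradiction would then be drawn by invoking Ahlfors' distortion theorem, or equivalently a Phragm\'en--Lindel\"of principle tailored to unbounded simply connected domains, applied to $g$ on $\Omega$ and using the finite order $\rho$ of $g$. The key point is that the extremal length (conformal size at infinity) of $\Omega$ is effective enough, in comparison with $\rho$, that a bounded holomorphic function on $\Omega$ whose boundary carries two unbounded arcs of distinct constant moduli cannot be the restriction of an entire transcendental function of finite order: either the size of $\Omega$ at $\infty$ compels $g$ to produce a tract over $\infty$ inside $\Omega$ (incompatible with $|g|\le M$), or the forced interior asymptotic value of modulus $\sqrt{r_mr_n}$ combines with the two distinct circular boundary values to violate the Denjoy--Carleman--Ahlfors bound on direct transcendental singularities (Theorem~\ref{th:DCA}).

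The principal obstacle is this final quantitative step, namely matching the conformal size of $\Omega$ at infinity to the order $\rho$ uniformly, independent of the specific geometry of the arcs $\gamma$ and $\gamma'$. The boundary-reduction and Poisson-integral parts of the argument are routine; it is the Phragm\'en--Lindel\"of / Ahlfors distortion estimate on the unbounded simply connected domain $\Omega$, used to convert the assumed sup bound on $F$ into either a tract of $\infty$ inside $\Omega$ or an overcount of direct singularities, that is the crux of the proof.
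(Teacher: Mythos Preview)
Your proposal has a genuine gap: the final step is not an obstacle to be overcome later but the entire content of the lemma, and neither of your suggested routes works. The Poisson computation (even granting it, which requires $G$ to be zero-free on $\mathbb{H}$---false in general, since $F$ may contain preimages of $0$) only pins down the limit of $|G(iy)|$, not of $G(iy)$ itself; you obtain a path along which $|g|\to\sqrt{r_mr_n}$, which is not an asymptotic value and creates no transcendental singularity, so Theorem~\ref{th:DCA} has nothing to count. Your first route (``the size of $\Omega$ compels a tract over $\infty$'') contradicts the very hypothesis $|g|\le M$ you are trying to refute, so it cannot be the source of the contradiction. More basically, the finite order of $g$ plays no role in this lemma: it is used only in the companion Lemma~\ref{th:setF}. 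Invoking Ahlfors distortion or Phragm\'en--Lindel\"of against $\rho$ here is misdirected. (As a side remark, your enlarged domain $\Omega=F\cup\Delta_m\cup\Delta_n$ does not have ``essentially bounded'' boundary in $\bC$: the arcs $\gamma,\gamma'$ are only pieces of $\partial\Delta_m,\partial\Delta_n$, and the remaining unbounded boundary arcs of $\Delta_m,\Delta_n$ persist in $\partial\Omega$.)

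The paper's proof stays entirely inside $F$ and uses only Lindel\"of's theorem (a bounded holomorphic function on a simply connected domain cannot have two distinct finite asymptotic values along paths to the same prime end). On the level curves $\gamma,\gamma'$ the Cauchy--Riemann equations force $\arg g$ to be monotone; if it were bounded on both, $g$ would converge along each to limits of moduli $r_m\neq r_n$, contradicting Lindel\"of. So $\arg g$ is unbounded on, say, $\gamma$. One then picks two angles $\varphi_1\neq\varphi_2$ avoiding critical values and, from the infinitely many points on $\gamma$ where $\arg g\equiv\varphi_j\pmod{2\pi}$, lifts the radial rays $\{te^{i\varphi_j}:t>r_m\}$ through $g$. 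For large indices these lifts are trapped in $F$ and tend to $\infty$; boundedness of $g$ on $F$ forces $g$ to converge along each lift to a finite point of argument $\varphi_j$. Lindel\"of again makes all these limits equal, hence $\varphi_1=\varphi_2$, the desired contradiction. The missing idea in your attempt is exactly this: manufacturing genuine asymptotic \emph{values} (not just asymptotic moduli) inside $F$, which the monotonicity of $\arg g$ on level curves and the radial-lift construction supply.
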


\begin{proof}
Suppose on the contrary that $g$ is bounded in $F$.
Then by Lindel\"of's theorem \cite[p.\ 75]{Nevan70},
if $g(z)$ converges as $z\to\infty$
along two asymptotic arcs in $\overline{F}$ and 
the limits are both in $\bC$, then they must coincide.

At once from the Cauchy-Riemann equations,
(any fixed branch of) $\arg g$ (not mod $2\pi$) varies monotonically
along each of the arcs $\gamma,\gamma'\subset\overline{F}$.
If the variation of $\arg g$ were bounded on each of $\gamma$ and $\gamma'$,
then $g(z)$ converges as $z\to\infty$ along each of them,
and by Lindel\"of's theorem, the limits must be same, 
which cannot be since $r_m\neq r_n$.

Thus, with no loss of generality, we may choose any fixed branch
of $\arg g$ on $\gamma$, and it will be unbounded on $\gamma$.
On the other hand, there are (in fact uncountably many)
distinct $\varphi_1,\varphi_2\in [0, 2\pi)$ such that
no critical value $w$ satisfies $\arg w=\varphi_1$ or $\varphi_2$ (mod $2\pi$).

For each $j=1,2$, we may choose infinitely many mutually distinct 
$z_{jk}\in\gamma$ $(k\in\bN)$ with $\arg g(z_{jk})=\varphi_j$ (mod $2\pi$).
For each $z_{jk}$, there exists the (maximal) lift $\Gamma_{jk}\subset\bC_z$ by $g$
of the radial ray $(r_m,\infty)\ni t\mapsto te^{i\varphi_j}\in\bC_w$
with an endpoint $z_{jk}$. 
Since the ray $\{w\in\bC;\arg w=\varphi_j\}$ is free of critical values, 
$\Gamma_{jk}\cap \Gamma_{jk'}=\emptyset$ if $k\neq k'$. 
In addition, as we noted in the beginning of this section,
$\Gamma_{jk}$ can intersect $S(r)$ for at most finitely many $k$.
On the other hand, for every $k$, $\Gamma_{jk}$ cannot intersect 
$\gamma'\cup\gamma$ since $|(g|F)|>r_m$ near $\gamma$ and $|(g|F)|>r_n$ near $\gamma'$.
Consequently, for all large $k$, the maximal lift $\Gamma_{jk}$
is a Jordan arc in $F$ tending to $\infty$. Then
since $g$ is bounded in $F$, there must exist $t=t_{jk}\in(r_m,\infty)$ such that
$g(z)\to te^{i\varphi_j}$ as $\Gamma_{jk}\ni z\to\infty$.

However, by Lindel\"of's theorem,
all the limits coincide for $j=1,2$ and all large $k$,
so that $\varphi_1=\varphi_2$, which is a contradiction.
\end{proof}

Now we complete the proof of the theorem.
Once we fix $r>0$ large enough,
we obtain at least $p$ of mutually disjoint domains $F$
satisfying the hypotheses of Lemma \ref{th:Lindelof}.
However,

\begin{lemma}\label{th:setF}
 If $p>2\rho$, then $g$ must be bounded on at least one of these $F$.
\end{lemma}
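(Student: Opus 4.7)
The plan is to argue by contradiction: assuming $g$ is unbounded on every one of the $p$ domains $F$ satisfying the hypotheses of Lemma~\ref{th:Lindelof}, I will produce at least $p$ pairwise distinct direct transcendental singularities of $g$ over $\infty$, contradicting (\ref{eq:DCA}) of Theorem~\ref{th:DCA} since $p>2\rho$.

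For each such $F$, first observe that $|g|$ is bounded on $\partial F$ by some $M_F<\infty$: on the compact arc $\partial F\cap S(r)$ by continuity of $g$, and on the unbounded arcs $\gamma\subset\partial\Delta_m,\gamma'\subset\partial\Delta_n$ by the identities $|g|\equiv r_m,|g|\equiv r_n$. The maximum modulus principle then implies that for every $R>M_F$, any non-empty component $C$ of $\{|g|>R\}\cap F$ must be unbounded: otherwise one would have $|g|\le R$ on $\partial C$ (equal to $R$ on the arc interior to $F$ and $\le M_F<R$ on the arc in $\partial F$), hence $|g|\le R$ on $\overline{C}$ by max modulus, contradicting $|g|>R$ on $C$.

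Assuming $g$ is unbounded on $F$, I would then produce a direct singularity $\mathcal{A}^F\in\TS(g)$ with $g(\mathcal{A}^F)=\infty$ and tract eventually contained in $F$. Choose $R_n\uparrow\infty$ with $R_1>M_F$ and inductively select nested unbounded components $T_0:=F\supset T_1\supset T_2\supset\cdots$, where $T_n$ is a component of $\{|g|>R_n\}\cap T_{n-1}$ on which $g$ remains unbounded. Since $|g(z)|<\infty$ at every finite $z$, $\bigcap_n T_n=\emptyset$, so the $T_n$'s determine such an $\mathcal{A}^F$ (taking the corresponding components of $g^{-1}(U_s(\infty))$); the singularity is automatically direct because $g$ is entire. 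As the $F$'s are pairwise disjoint and each tract lies eventually in its $F$, the resulting $\mathcal{A}^F$'s are mutually distinct, yielding $p$ direct singularities of $g$ over $\infty$ and so contradicting (\ref{eq:DCA}).

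The main obstacle is the inductive selection: at each stage one needs an unbounded component of $\{|g|>R_n\}\cap T_{n-1}$ on which $g$ remains unbounded. Were this to fail, every unbounded component of $\{|g|>R_n\}\cap T_{n-1}$ would have $g$ bounded on it; ruling out this pathology requires a Phragm\'en--Lindel\"of / Ahlfors-type argument that exploits the finite-order hypothesis $\rho<\infty$ to restrict either the number or the angular spread at $\infty$ of such components inside $T_{n-1}$, and thereby forces $g$ itself to be bounded on $T_{n-1}$, a contradiction. The finite-order assumption thus enters decisively precisely in justifying this descent.
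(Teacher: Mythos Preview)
Your strategy—argue by contradiction, manufacture $p$ direct singularities over $\infty$, and invoke the Denjoy--Carleman--Ahlfors bound (\ref{eq:DCA})—is a coherent plan, but as you yourself flag in the final paragraph, it is not a proof: the decisive step is missing. The inductive descent producing $\mathcal{A}^F$ needs, at each stage, a component $T_n\subset T_{n-1}$ of $\{|g|>R_n\}$ on which $g$ is still unbounded. You correctly observe that this may fail if $T_{n-1}$ contains infinitely many components of $\{|g|>R_n\}$, each with $g$ bounded but with suprema tending to $\infty$, and you then say only that ruling this out ``requires a Phragm\'en--Lindel\"of / Ahlfors-type argument that exploits the finite-order hypothesis.'' That deferred argument is precisely the content of the paper's proof of the lemma: the harmonic-measure/angular-width estimate, the Cauchy--Schwarz averaging over the $p$ regions, and the two-constants theorem together are exactly the mechanism by which finite order forces $\log|g|$ to stay bounded on one of the $F$'s.

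So the proposal relocates the difficulty rather than resolves it. Completing your descent would reproduce, inside each $F$, essentially the same computation the paper carries out directly—after which you would still have to invoke (\ref{eq:DCA}), whose classical proof rests on that very estimate once more. In short, what you have is an outline whose missing ingredient \emph{is} the lemma.
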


\begin{proof}
We need a standard estimate of {\itshape harmonic measure} $\omega$
(cf.\ \cite[XI. \S 4]{Nevan70}).
In the following,
$C_1,C_2(>2\log 2),C_3,C_4$ denote some positive constants independent of $R>0$:

Let $F\subset\bC$ be an arbitrary domain intersecting $S(t)$ 
so that $S(t)\setminus F\neq \emptyset$ for every $t\ge r$,
and let $\theta_F(t)$ the angular measure of $F\cap S(t)$. Then
 \begin{gather*}
  \omega(\cdot,F,F\cap S(R))<C_1\exp\left(-\pi\int_{2r}^{R/2}\frac{\rd t}{t\theta_F(t)}\right)\text{ on } F\cap A(2r,R/2)\label{eq:tsuji}
 \end{gather*}
as soon as $R/2>2r$, where $A(r,R):=\{r<|z|<R\}$. Let $F_1,\ldots, F_\ell\subset\bC$
($\ell\in\bN$) be arbitrary mutually disjoint domains intersecting $S(t)$
for every $t\ge r$, with angular measure $\theta_j(t):=\theta_{F_j}(t)$ as above.
Note that $\sum_{j=1}^{\ell}\theta_j(t)\le 2\pi$ for all such $t$.
Then by an argument involving the Cauchy-Schwarz inequality,
we find some $j\in\{1,\ldots,\ell\}$
and a sequence $(R_k)\subset\bR$ tending to $\infty$ as $k\to\infty$
such that for every $R=R_k$,
\begin{gather*}
 \pi\int_{2r}^{R/2}\frac{\rd t}{t\theta_j(t)}\ge
 \frac{\ell}{2}\left(\log\frac{R}{r}-C_2\right).\label{eq:harm}
\end{gather*}

Applying these estimates to $p$ of our domains $F$,
we conclude that for one of these $F$, 
there is a sequence $(R_k)\subset\bR$ with $R_k\nearrow\infty$ as $k\to\infty$
such that for each $R=R_k$,
\begin{gather*}
 \omega(\cdot,F,F\cap S(R))<C_3(R/r)^{-p/2}\text{ on }F\cap A(2r,R/2),
\end{gather*}
while (since $(r_m)$ decreases,) 
we have $\log |g|<\log r_1+(\max_{S(r)}\log|g|)<C_4$ on $\partial F$.
Since $g$ is entire, we also have
\begin{gather*}
 \rho=\limsup_{r\to\infty}\frac{\log\max_{|z|=r}\log^+|g(z)|}{\log r},\label{eq:orderentire}
\end{gather*}
from which, for all small $\epsilon\in(0,(p-2\rho)/2)$,
we have for all large $R$, $\log|g|<R^{\rho+\epsilon}$ on $S(R)$.
Thus by the two-constants theorem, for all large $R=R_k$,
\begin{gather*}
 \log|g|\le C_4+R^{\rho+\epsilon}\cdot C_3(R/r)^{-p/2}<2C_4\text{ on }F\cap A(2r,R/2).
\end{gather*}
Hence $g$ is bounded on $F$.
\end{proof}

\begin{ac}
 The authors thank the Department of Mathematics and Statistics,
 University of Helsinki for providing a stimulating atmosphere
 and making this collaboration possible. The first author
 was on sabbatical from Purdue University, and
 the second author was supported by the exchange program of scientists
 between the Academy of Finland and the Japan Society for Promotion of Science.
 The second author also thanks the Purdue Mathematics Department for
 supporting his visit to Purdue, where this was completed.
\end{ac}

\begin{supplement}
 Since this issue is dedicated to Walter Hayman,
 I want to indicate the significant influence he has had in my career. 
 I am sure that my experiences are not unusual, but usually there are no opportunities
 to make such comments.

 His book {\itshape Meromorphic Functions} went through two editions, and even today
 may be the most efficient introduction to classical Nevanlinna theory in one
 complex variable (it was also the first text in English!).  The first two chapters develop the fundamental
 theorems established by Rolf Nevanlinna in the 1920s, but every other chapter
 was centered on a topic which quickly led the reader to open questions, whose
 resolution has been a major activity for 30-plus years (Chapter 4 was
 my focus).   His series of  problem compilations,
 an activity beginning a few years before MF, efficiently covered a large
 variety of fields related to one complex variable (including some
 higher-dimensional questions).  In those pre-internet days, assembling the
 material from colleagues all over the world required special effort.
 Until recent times (circa 1990) international contact was difficult in
 many countries, and his visits, letters and collections played an important
 role in supporting colleagues and students from these countries.

 In his dealings with colleagues, he was always encouraging, and raising
 interesting questions for study.
 We met in 1967, Montreal, where I asked him a question about minimum
 modulus. Several weeks later, I received his example, which was the
 kernel of several of my later works (some joint with Dan Shea).
 At conferences he always had time to talk to participants, in a way
 that was always enthusiastic, supportive; he treated everyone with
 the same courtesy and spirit.
\end{supplement}

\def\cprime{$'$} \def\polhk#1{\setbox0=\hbox{#1}{\ooalign{\hidewidth
  \lower1.5ex\hbox{`}\hidewidth\crcr\unhbox0}}}

\end{document}